\numberwithin{equation}{section}
\theoremstyle{plain}
\newtheorem{theorem}{Theorem}[section]
\newtheorem{lemma}[theorem]{Lemma}
\newtheorem{proposition}[theorem]{Proposition}
\newtheorem{corollary}[theorem]{Corollary}
\newenvironment{customthm}[1]
  {\innercustomthm}
  {\endinnercustomthm}
\theoremstyle{definition}
\newtheorem{definition}[theorem]{Definition}
\theoremstyle{remark}
\newtheorem{remark}[theorem]{Remark}
\def\dashint{\operatorname%
{\,\,\text{\bf--}\kern-.98em\DOTSI\intop\ilimits@\!\!}}
\def\bR{\mathbb{R}}
\def\bT{\mathbb{T}}
\def\MR#1{}
\newlength{\defbaselineskip}
\begin{document}
\title{Regular solutions of the stationary Navier-Stokes equations on high dimensional Euclidean space}

\author{YanYan Li\footnote{Department of
Mathematics, Rutgers University, 110 Frelinghuysen Rd, Piscataway,
NJ 08854, USA. Email: yyli@math.rutgers.edu.}~\footnote{Partially supported by NSF Grants DMS-1501004, DMS-2000261, and Simons Fellows Award 677077.}\quad and \quad Zhuolun Yang\footnote{Department of Mathematics, Rutgers University, 110 Frelinghuysen Rd, Piscataway,
NJ 08854, USA. Email: zy110@math.rutgers.edu.}~\footnote{Partially supported by NSF Grants DMS-1501004 and DMS-2000261.}}
\date{}
\maketitle

\begin{abstract}
We study the existence of regular solutions of the incompressible stationary Navier-Stokes equations in $n$-dimensional Euclidean space with a given bounded external force of compact support. In dimensions $n\le 5$, the existence of such solutions was known. In this paper, we extend it to dimensions $n\le 15$.
\end{abstract}

\section{Introduction and main results}
The incompressible stationary Navier-Stokes equations that describe the motion of a steady-state viscous fluid are formulated as follows (with viscosity $\nu = 1$):

\begin{equation}\label{NS_Omega}
\left\{
\begin{aligned}
- \Delta u + (u \cdot \nabla) u + \nabla p &= f,\\
\mbox{div }u &= 0,
\end{aligned}
\right.
\end{equation}
where $u$ and $f$ are vector fields representing velocity and external force respectively, $p$ is a scalar function representing pressure. Let $f$ be a bounded external force, we say that $(u,p)$ is a regular solution of \eqref{NS_Omega}, if
$$u \in W^{2,s}_{loc}, \quad \mbox{and} \quad p \in W^{1,s}_{loc},$$
for any $s < \infty$.

We are interested in the existence of regular solutions of \eqref{NS_Omega}, in dimensions $n \ge 5$. Such existence results are classical in dimensions $n =2,3$, see, e.g., \cite{T}, while in dimension $n = 4$ it follows from Gerhardt \cite{Ger}. The problem \eqref{NS_Omega} is classified as "super-critical" in dimensions $n \ge 5$. Frehse and Rů\v{z}i\v{c}ka \cite{FR0} showed that in a bounded domain in $\bR^5$ with Dirichlet boundary data $u = 0$, problem \eqref{NS_Omega} has certain weak solutions which are “almost regular”. Struwe \cite{S} established on $\bR^5$ and on torus $\bT^5$ a $C^1$ a-priori bound of solutions and proved the existence of regular solutions. Frehse and Rů\v{z}i\v{c}ka established in \cite{FR3,FR4} the a-priori bound of solutions and the existence of regular solutions in $\bT^n$ for $5 \le n \le 15$, and produced in \cite{FR1, FR2} weak solutions of the Dirichlet problem that are regular in the interior in dimension $n=5,6$. We refer to \cite{T2, Ko} and \cite[Chapter 7]{BF} for simplified proofs and more discussions on this subject.

For small data, the existence of regular solutions of the Dirichlet problem in any dimension were studied by Farwig and Sohr \cite{FS}.

In this paper, we consider the stationary Navier-Stokes equations on the Euclidean space:
\begin{equation}\label{NS}
\left\{
\begin{aligned}
- \Delta u + (u \cdot \nabla) u + \nabla p &= f\\
\mbox{div }u &= 0
\end{aligned}
\right.
\quad \mbox{in } \bR^n,
\end{equation}
and extend the above mentioned result in \cite{S} for $n = 5$ to $n \le 15$. Our main result is as follows.

\begin{theorem}\label{main}
For $5 \le n \le 15$ and $f \in L^\infty(\bR^n)$ with compact support, there exists a regular solution $(u,p)$ of \eqref{NS}. Furthermore, the solution $(u,p)$ satisfies
\begin{align}\label{reasonable_decay}
|u(x)| \le \frac{C}{(1+ |x|)^{n - 2}}, \quad |\nabla u(x)|+ |p(x)| \le \frac{C}{(1+ |x|)^{n - 1}}, \quad \forall x \in \bR^n,
\end{align}
where $C > 0$ depends only on $n$, an upper bound of the diameter of supp$(f)$, and an upper bound of $\|f\|_{L^\infty(\bR^n)}$.
\end{theorem}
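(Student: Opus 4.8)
The plan is to construct the solution on $\bR^n$ by an exhaustion argument, solving the Navier-Stokes system on a sequence of large balls $B_R$ with Dirichlet boundary condition $u=0$ on $\partial B_R$, and then passing to the limit as $R \to \infty$. On each bounded ball, existence of a weak solution is classical (Leray-Hopf theory via Galerkin approximation), and the regularity results quoted from Frehse--R\r{u}\v{z}i\v{c}ka \cite{FR3,FR4} for $5 \le n \le 15$ give that these solutions are regular in the interior. The entire difficulty is therefore moved to obtaining an \emph{a-priori bound}, uniform in $R$, that controls the solution strongly enough to take a limit and to deduce the claimed decay \eqref{reasonable_decay}.

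\textbf{Step 1 (energy and scaling-invariant bounds).} First I would record the basic energy identity: multiplying the equation by $u$ and integrating (the convection and pressure terms drop out by $\operatorname{div}u=0$ and the boundary condition), one gets $\int_{B_R} |\nabla u|^2 = \int_{B_R} f \cdot u$, hence a uniform $H^1$ bound once $f$ is controlled, say by Sobolev embedding and the compact support of $f$. This is the starting point but is far from sufficient in the supercritical range $n \ge 5$, where $\|\nabla u\|_{L^2}$ does not control the nonlinearity.

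\textbf{Step 2 (pointwise a-priori bound via the Frehse--R\r{u}\v{z}i\v{c}ka / Struwe technique).} The heart of the argument, following Struwe \cite{S} in $n=5$ and its dimensional extension, is a local $C^1$ a-priori estimate. The key idea is to test the equation against a cleverly chosen function involving a regularized head pressure $h = \tfrac12 |u|^2 + p$, which satisfies a useful differential inequality because $-\Delta h = -|\nabla u|^2 - u \cdot \nabla h + \operatorname{div} f + \ldots$; combining this with the maximum principle and a Caccioppoli-type iteration yields local bounds on $u$ and $\nabla u$ that are independent of $R$. I expect this to be the main obstacle: the admissible dimension range $n \le 15$ is dictated precisely by the exponents in this bootstrap, so the argument must be carried out carefully to see that the borrowed exponents (from Sobolev and from the structure of the head-pressure inequality) close up to $n=15$ and no further.

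\textbf{Step 3 (decay estimate and limit).} With uniform local $L^\infty$ bounds on $u$ and $\nabla u$ in hand, I would establish the decay \eqref{reasonable_decay} by treating the equation far from $\operatorname{supp}(f)$ as a perturbation of the Stokes system: representing $u$ through the Oseen/Stokes fundamental solution (whose kernel decays like $|x|^{-(n-2)}$, with gradient like $|x|^{-(n-1)}$) and absorbing the quadratic term $(u\cdot\nabla)u$ via a fixed-point or continuity argument in a weighted space, one obtains the stated rates, with the constant $C$ depending only on $n$, $\operatorname{diam}(\operatorname{supp} f)$, and $\|f\|_{L^\infty}$. Finally, the uniform-in-$R$ local estimates give, via Arzel\`a--Ascoli and interior elliptic regularity, a subsequence converging in $C^1_{loc}$ to a limit $(u,p)$ that solves \eqref{NS} on all of $\bR^n$, is regular in the sense $u \in W^{2,s}_{loc}$, $p \in W^{1,s}_{loc}$ for all $s<\infty$, and inherits the decay \eqref{reasonable_decay}.
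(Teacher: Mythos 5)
Your overall architecture differs from the paper's in a way that hides the real difficulty. The paper does not exhaust $\bR^n$ by balls with Dirichlet data; it works directly with the integral equation $u_i = U_{ij}\ast(f_j - u_k\partial_k u_j)$ in a weighted space $C^1_d(\bR^n)$ of decaying divergence-free fields, proves an a-priori bound in that space, and concludes by Leray--Schauder degree for the compact operator defined by the Stokes potential (with the homotopy $t\mapsto tf$). Your Step 1 premise --- that on each ball $B_R$ one has a Leray--Hopf solution which is regular in the interior for all $5\le n\le 15$ by \cite{FR3,FR4} --- is not available: \cite{FR3,FR4} treat the torus $\bT^n$, and for the Dirichlet problem on bounded domains interior regularity is only known for $n=5,6$ (\cite{FR1,FR2}); for $n\ge 7$ the Dirichlet case is open. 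The head-pressure machinery is exactly the part that does not localize well near a boundary where $u=0$ but $\theta=p$ is uncontrolled, so the exhaustion scheme cannot simply borrow the torus result. This is a genuine gap, not a presentational choice: the whole-space/integral-equation formulation is what lets the authors run the cut-off arguments (energy identity, weighted head-pressure tests, the decay lemma for $\int_{\bR^n\setminus B_R}|\nabla u|^2$) with boundary terms killed by the decay built into $C^1_d$ rather than by a boundary condition.

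Two further points are asserted rather than proved, and they are where the content lies. First, the mechanism that forces $n\le 15$: in the paper it is the pair of coupled estimates $\|\theta_+\|_{L^{nq/(n-2q)}}\le C(\|u\|_{L^q(\mathrm{supp}f)}+1)$ and $\|u\|_{L^q(B_{R_0})}\le C(\|\theta_+\|^{(q-2)/(2q)}_{L^{nq/(n-2q)}}+\|\theta_+\|^{1/2}_{L^{nq/(n-2q)}}+1)$, which close because the exponents are below $1$, and which require a $q$ with $\max\{2,n/4\}<q<\min\{4,n/2\}$; nonemptiness of that interval is precisely $n<16$. Saying the exponents ``must close up to $n=15$'' does not substitute for exhibiting this. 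Second, in Step 3 you cannot absorb $(u\cdot\nabla)u$ in a weighted space by a direct fixed point starting only from local $L^\infty$ bounds: the paper first derives the intermediate decay $|u|\lesssim |x|^{1-n/2}$ by combining the Tian--Xin $\varepsilon$-regularity criterion with the decay of the tail Dirichlet energy (Lemma 3.1) and a rescaling/bootstrap through interior Stokes estimates, and only then feeds this into the integral representation to reach $|u|\lesssim (1+|x|)^{-(n-3)}$ and ultimately \eqref{reasonable_decay}. Without that intermediate step the quadratic term is not yet small enough in the relevant weighted norm for the potential estimates to improve the decay.
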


\begin{remark}
For simplicity, the external force is assumed to be compactly supported. Indeed, our proof works for $f$ with sufficient decay at infinity.
\end{remark}

\begin{remark}
If in addition, $f \in W^{m, \infty}$, for $m \ge 0$, then $u \in W^{m+2, s}_{loc}$, $p \in W^{m+1, s}_{loc}$ for any $s < \infty$, and for all $1 \le l \le m+1$,
$$|\nabla^l u(x)| + |\nabla^{l-1} p(x)| \le \frac{C}{(1+ |x|)^{n - 2 + l}}, \quad \forall x \in \bR^n.$$
where $C > 0$ depends only on $n$, an upper bound of the diameter of supp$(f)$, and an upper bound of $\|f\|_{W^{m,\infty}(\bR^n)}$. This follows from standard estimates for stationary Stokes equations.
\end{remark}

One related question is whether $H^1$ weak solutions of \eqref{NS_Omega} are regular. An affirmative answer is classical in dimensions $n=2$, $3$. The case $n = 4$ was proved by Gerhardt \cite{Ger}. Giaquinta and Modica \cite{GM} proved that $H^1$ weak solutions are regular for a class of nonlinear systems including the stationary Navier-Stokes in dimensions $n \le 4$. The question remains open in dimensions $n \ge 5$. Sohr \cite{So} showed that $u \in H^{1} \cap L^n$ is regular in any dimension. As a consequence of the techniques developed in \cite{FR0,FR1,FR2,FR3,FR4}, Frehse and Rů\v{z}i\v{c}ka gave in \cite{FR5} a new regularity criterion which improved the result of \cite{So}. 

Starting from the groundbreaking work of De Lellis and Sz\'{e}kelyhidi Jr. \cite{DeLS1}, there has been much development in applications of the convex integration method in connection with the Euler and Navier-Stokes equations; see the survey papers \cite{DeLS, BV}. Buckmaster and Vicol \cite{BV1} recently proved the nonuniqueness of weak solutions of the 3D evolutionary Navier-Stokes equations with finite energy using the convex integration method; see also \cite{BCV, Luo, CL} for related works. In particular, it was shown in \cite{Luo} that there exists a non-regular solution $u$ of \eqref{NS_Omega} on $\bT^n$ with $n \ge 4, f = 0$, which lies in $H^\beta(\bT^n)$ for any $\beta < \frac{1}{200}$. It was also pointed out that the regularity can be improved to $H^\beta$ for any $\beta < \frac{1}{2}$ when $n$ is sufficiently large.

In a seminal paper \cite{CKN}, Caffarelli, Kohn and Nirenberg proved that the 1-dimensional Hausdorff measure of the singular set of a suitable weak solution to the 3D evolutionary Navier-Stokes equations is zero. Partial regularity results for stationary Navier-Stokes equations were established by Struwe \cite{S2} in dimension $n = 5$, and by Dong and Strain \cite{DS} in dimension $n = 6$. For suitable weak solutions, they established an $\varepsilon$-regularity criterion in terms of a scaling invariant quantity of $\nabla u$. This implies that $u$ is regular outside a set of zero $n-4$ dimensional Hausdorff measure. The above results were extended up to the boundary by Kang \cite{K} and Dong and Gu \cite{DG} in dimensions $n = 5,6$ respectively. Tian and Xin \cite{TX} established an $\varepsilon$-regularity criterion in terms of a scaling invariant quantity of the vorticity for smooth solutions in any dimension.

For some other related studies on incompressible stationary Navier-Stokes equations on the Euclidean space, see, e.g., \cite{ST,JS, KS} and the references therein.

Theorem \ref{main} is proved by establishing a-priori estimates of solutions in appropriate function spaces, and then applying the Leray-Schauder degree theory. Our proof is based on the results and methods developed in the work of Frehse and Rů\v{z}i\v{c}ka \cite{FR0, FR1, FR2, FR3, FR4, FR5, FR6}, Struwe \cite{S}, and Tian and Xin \cite{TX}.

Let
\begin{equation}\label{Stokes_fundamental_UP}
\left\{
\begin{aligned}
U_{ij}(x) &= \frac{1}{2n\omega_n} \left[ \frac{\delta_{ij}}{(n-2)|x|^{n-2}} + \frac{x_i x_j}{|x|^n} \right],\\
P_j(x) &= \frac{1}{n\omega_n} \frac{x_j}{|x|^n},
\end{aligned}
\right.
\end{equation}
denote the fundamental solution of the stationary Stokes equations. That is, for each fixed $j$, we have
\begin{equation*}\label{Stokes_fundamental}
\left\{
\begin{aligned}
- \Delta U_{ij} + \partial_i P_j &= \delta_{ij}\delta_0,\\
\partial_i U_{ij} &= 0,
\end{aligned}
\right.
\end{equation*}
where $\delta_{ij}$ is the Kronecker delta ($\delta_{ij} = 0$ for $i \neq j$ and $\delta_{ii} = 1$) 
and $\delta_0$ is the Dirac mass at the origin.

Instead of working with \eqref{NS}, we will work with the following integral equation 
\begin{equation}\label{integral_equation}
u_i(x) = \int_{\bR^n} U_{ij}(x-y) \left(f_j(y) - u_k(y) \partial_k u_j(y)\right) \, dy,
\end{equation}
and find a solution $u$ with proper decay at infinity.
We define the space $C_d^1(\bR^n)$ to be the closure of $C_{c,\sigma}^\infty(\bR^n)$, the space of smooth, divergence-free, and compactly supported vector fields on $\bR^n$, under the norm
$$\|u\|_{C_d^1(\bR^n)}:= \left\| (1+ |\cdot|)^{n - 3} u \right\|_{L^\infty(\bR^n)} + \left\| (1+ |\cdot|)^{n - 2} \nabla u \right\|_{L^\infty(\bR^n)}.$$

To prove Theorem \ref{main}, we only need to show the existence of a solution $u \in C_d^1(\bR^n)$ of \eqref{integral_equation}. For such a solution $u$, let
\begin{equation}\label{pressure}
p(x) := \int_{\bR^n} P_j(x - y) \left(f_j(y) - u_k(y) \partial_k u_j(y)\right) \, dy.
\end{equation}
One can verify that $(u,p)$ solves the stationary Navier-Stokes equation \eqref{NS}.
\begin{remark}
In the definition of the space $C_d^1(\bR^n)$, if the exponents $n-3$ and $n-2$ are replaced by $\alpha$ and $\alpha+1$, for any $\alpha \in (1,n-2)$, our proof will go through essentially the same way. The solution $u$ we find enjoys a better decay as stated in \eqref{reasonable_decay}. The reason we choose an exponent $\alpha < n-2$ is to ensure that the operator defined by the right hand side of \eqref{integral_equation} is a compact operator from $C_d^1(\bR^n)$ to itself.
\end{remark}

The following crucial a-priori estimate allows us to show the existence of a solution of \eqref{integral_equation} in $C_d^1(\bR^n)$ using the Leray-Schauder degree theory.

\begin{theorem}\label{decay_u_prop}
For $5 \le n \le 15$, and for $f \in L^\infty(\bR^n)$ with compact support, let $u \in C_d^1(\bR^n)$ be a solution of \eqref{integral_equation}. Then
\begin{equation*}\label{u_decay_norm_control}
\|u\|_{C_d^1(\bR^n)} \le C,
\end{equation*}
where $C > 0$ depends only on $n$, an upper bound of the diameter of supp$(f)$, and an upper bound of $\|f\|_{L^\infty}$.
\end{theorem}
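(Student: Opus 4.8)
The plan is to split the bound $\|u\|_{C_d^1(\bR^n)}\le C$ into an interior estimate and a decay estimate. First, elliptic regularity for the stationary Stokes system applied to \eqref{integral_equation} and \eqref{pressure} shows that $(u,p)$ is a regular solution of the differential equation \eqref{NS}, so I may argue with the PDE directly. Fix a ball $B_R$ with $\operatorname{supp}f\subset B_R$. The two ingredients I would aim for are (i) an interior bound $\|u\|_{L^\infty(B_{2R})}+\|\nabla u\|_{L^\infty(B_R)}\le C$, and (ii) matching decay $|u(x)|\lesssim(1+|x|)^{-(n-2)}$, $|\nabla u(x)|\lesssim(1+|x|)^{-(n-1)}$ for $|x|$ large. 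Part (ii) would follow from (i) by inserting the interior bound into the integral representation \eqref{integral_equation} and bootstrapping: the Stokes kernel obeys $|U_{ij}(x)|\lesssim|x|^{-(n-2)}$ and $|\nabla U_{ij}(x)|\lesssim|x|^{-(n-1)}$, the forcing is compactly supported, and the quadratic term $u_k\partial_k u_j$ inherits decay from the interior bound and from the (qualitative) decay of $u\in C_d^1(\bR^n)$, so one iterates to reach the rate in \eqref{reasonable_decay}, which is even stronger than what the $C_d^1$ norm requires.

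The core is the interior bound (i), and here I would follow the head-pressure method of Frehse and Růžička \cite{FR3,FR4} together with the vorticity $\varepsilon$-regularity of Tian and Xin \cite{TX}. I begin with the global energy estimate: pairing the momentum equation with $u$ and integrating over $\bR^n$, the convection term vanishes by $\operatorname{div}u=0$ and the boundary terms vanish because of the $C_d^1(\bR^n)$ decay (note $\nabla u\in L^2(\bR^n)$ precisely because $n\ge5$), giving $\int_{\bR^n}|\nabla u|^2\,dx=\int_{\bR^n}f\cdot u\,dx$; estimating the right side by H\"older and the Sobolev inequality $\|u\|_{L^{2n/(n-2)}}\le C\|\nabla u\|_{L^2}$ yields $\|\nabla u\|_{L^2(\bR^n)}\le C$, since $f$ is bounded with compact support. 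Next I introduce the head pressure $h=\tfrac12|u|^2+p$. Taking the divergence of the momentum equation gives $-\Delta p=\partial_i\partial_j(u_iu_j)-\operatorname{div}f$, and $\partial_i\partial_j(u_iu_j)=\partial_iu_j\,\partial_ju_i$ by incompressibility; combined with $-\tfrac12\Delta|u|^2=-|\nabla u|^2-u\cdot\Delta u$ and the equation itself, this yields an elliptic equation for $h$ whose right-hand side has a structure permitting a one-sided maximum principle. A De Giorgi--Moser argument then produces a local upper bound $\sup_{B_{2R}}h\le C$, equivalently $p\le C-\tfrac12|u|^2$; this sign information on $h$ is what tames the supercritical convection term.

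With the head-pressure bound in hand, the heart of the matter is a nonlinear Caccioppoli/Moser iteration upgrading the $L^2$ energy of $\nabla u$ to higher integrability of $|u|$. The scheme is to test the identity $-\tfrac12\Delta|u|^2+|\nabla u|^2+\operatorname{div}(hu)=u\cdot f$ against weighted powers $|u|^{2\gamma}$, to use $h\le C$ to control the resulting $\operatorname{div}(hu)$ term, and to exploit the Sobolev embedding $H^1\hookrightarrow L^{2n/(n-2)}$ to close a recursion for the quantities $\int_{B_R}|u|^{2\gamma}$. Each step trades a power of $|u|$ supplied by the energy and the head pressure against the power demanded by the supercritical scaling of the nonlinearity. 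I expect this bookkeeping of exponents to be the main obstacle: the recursion converges to an $L^\infty_{loc}$ bound on $u$ precisely when $n\le15$, whereas for $n\ge16$ the Sobolev gain no longer dominates the loss from the convection term and the iteration breaks down, which is exactly the source of the dimensional restriction. (Equivalently, the final step can be routed through the Tian--Xin criterion \cite{TX}, using the head-pressure bound to force the scale-invariant vorticity quantity to be small on small balls, hence $u$ regular.) Once $u\in L^\infty(B_{2R})$, standard $W^{2,s}$ estimates for the stationary Stokes system give $\|\nabla u\|_{L^\infty(B_R)}\le C$, completing (i) and, together with (ii), the theorem.
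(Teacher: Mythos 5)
Your outline captures several of the paper's ingredients (global energy estimate, the head pressure $\theta=\tfrac12|u|^2+p$, the Tian--Xin $\varepsilon$-regularity criterion, and a final bootstrap through the integral equation), but there are two genuine gaps. First, in step (i) you assert that a De Giorgi--Moser argument gives $\sup_{B_{2R}}h\le C$. This does not close in dimensions $n\ge5$: the drift $u$ in $-\Delta\theta+u\cdot\nabla\theta=-|\partial_iu_j-\partial_ju_i|^2+f\cdot u-\operatorname{div}f$ is only known to lie in $L^{2n/(n-2)}$, which is supercritical relative to $L^n$, so neither the maximum principle nor a straightforward Moser iteration applies. The paper never obtains a pointwise bound on $\theta_+$; it obtains only $\|\theta_+\|_{L^r}$ for some $r>n/2$ (Proposition \ref{theta}), and even this requires closing a loop between two inequalities --- $\|\theta_+\|_{L^{nq/(n-2q)}}\lesssim\|u\|_{L^q(\operatorname{supp}f)}+1$ from testing \eqref{headpressure} with $\theta_+^s\eta^2$, and $\|u\|_{L^q(B_{R_0})}\lesssim\|\theta_+\|^{(q-2)/(2q)}+\|\theta_+\|^{1/2}+1$ via the weighted pressure estimate of Lemma \ref{p_weighted_lemma} --- with the restriction $n\le15$ entering precisely because the interval $(\max\{2,n/4\},\min\{4,n/2\})$ must be nonempty. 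Your alternative localization of the dimension restriction inside a Moser iteration on $|u|^{2\gamma}$ is left entirely heuristic (``I expect this bookkeeping of exponents to be the main obstacle''), so the heart of the interior estimate is not actually supplied.

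Second, and more decisively, step (ii) is not the routine bootstrap you describe. A global bound $\|u\|_{L^\infty}+\|\nabla u\|_{L^\infty}\le C$ gives the quadratic term no decay at all, so inserting it into \eqref{integral_equation} produces a divergent integral; and the ``qualitative'' decay coming from $u\in C_d^1(\bR^n)$ carries the constant $\|u\|_{C_d^1}$, which is exactly the quantity you are trying to bound, so it cannot be fed back into the representation formula to produce an a priori estimate. The missing content is the paper's Section 3: a quantitative tail estimate $\int_{\bR^n\setminus B_R}|\nabla u|^2<\varepsilon$ with $R$ depending only on $f$ (Lemma \ref{decay}, proved by a pigeonhole over annuli $E_i=B_{i+1}\setminus B_i$ using $\sum 1/i\sim\log$), followed by a scaling argument: one applies Theorem \ref{regularity_criterion} to $v(x)=Ru(Rx+x_0)$ on balls with $|x_0|=3R$ to obtain the initial quantitative rates $|u(x_0)|\le C|x_0|^{1-n/2}$, $|\nabla u(x_0)|\le C|x_0|^{-n/2}$ with constants depending only on $f$. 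Only after this does the product $|u|\,|\nabla u|$ decay like $(1+|x|)^{-(n-1)}$ with controlled constants, at which point Lemma \ref{calculus_lemma} and the integral equation yield the $C_d^1$ bound. Without this intermediate step your argument does not produce any admissible constant in the decay estimate.
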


The remaining part of this paper is organized as follows. Some preliminary a-priori estimates are proved in Section 2. Theorem \ref{decay_u_prop} is proved in Section 3. In Section 4, we apply the Leray-Schauder degree theory to complete the proof of Theorem \ref{main}, using Theorem \ref{decay_u_prop}.\\

\section{Some preliminary a-priori estimates}

In this section, we give some preliminary estimates on solutions $u$ of \eqref{integral_equation} in $C_d^1(\bR^n)$. First, we present a calculus lemma that can be verified easily.

\begin{lemma}\label{calculus_lemma}
Let
$$F(x) := \int_{\bR^n} |x - y|^{- \alpha} (1 + |y|)^{- \beta} \, dy, \quad x \in \bR^n,$$
with $0 \le \alpha < n, \alpha + \beta > n$. Then for all $x \in \bR^n$,
$$|F(x)| \le \left\{
\begin{aligned} 
&C \log (2 + |x|) (1 + |x|)^{-\alpha},  &&\mbox{if}~\beta = n,\\
&C (1 + |x|)^{-\gamma}, &&\mbox{if}~\beta \neq n,
\end{aligned}
\right.$$
where $\gamma = \min\{ \alpha, \alpha + \beta - n \}$, $C>0$ depends only on $\alpha, \beta$ and $n$.\\
\end{lemma}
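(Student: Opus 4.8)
The plan is to estimate $F(x)$ by decomposing the integration domain according to which of the two factors is singular or small, handling the cases $|x|$ small and $|x|$ large separately. For $|x|$ bounded, say $|x| \le 2$, the hypothesis $\alpha < n$ gives local integrability of $|x-y|^{-\alpha}$ near $y = x$, while $\alpha + \beta > n$ gives integrability at infinity (where the integrand behaves like $|y|^{-\alpha-\beta}$); hence $F(x)$ is bounded by a constant, and since the claimed right-hand sides are bounded below by a positive constant on $|x| \le 2$, the estimate is trivial there. So I may assume $|x| \ge 2$.

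For $|x| \ge 2$, I would split $\bR^n$ into three regions: $A_1 = \{|y - x| \le |x|/2\}$, $A_2 = \{|y| \le |x|/2\}$, and $A_3$ the complementary set on which both $|y| \ge |x|/2$ and $|y - x| \ge |x|/2$. On $A_1$ one has $|y| \ge |x|/2$, so $(1 + |y|)^{-\beta} \lesssim (1 + |x|)^{-\beta}$ factors out and the remaining integral $\int_{|z| \le |x|/2} |z|^{-\alpha}\,dz \approx |x|^{n-\alpha}$ is finite because $\alpha < n$, yielding a contribution $\lesssim (1 + |x|)^{n - \alpha - \beta}$. On $A_2$ one has $|x - y| \ge |x|/2$, so $|x - y|^{-\alpha} \lesssim (1 + |x|)^{-\alpha}$ factors out and one is left with $\int_{|y| \le |x|/2} (1 + |y|)^{-\beta}\,dy$, which is $\lesssim 1$ if $\beta > n$, $\approx \log(2 + |x|)$ if $\beta = n$, and $\approx |x|^{n-\beta}$ if $\beta < n$.

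For $A_3$ I would split once more along $|y| = 2|x|$. On $\{|y| > 2|x|\}$ one has $|x - y| \ge |y|/2$, so the integrand is $\approx |y|^{-\alpha-\beta}$ and its integral is $\approx |x|^{n-\alpha-\beta}$, finite and decaying because $\alpha + \beta > n$; on $\{|x|/2 < |y| \le 2|x|\} \cap A_3$ the factor $(1 + |y|)^{-\beta} \approx (1 + |x|)^{-\beta}$ comes out and $|x - y|^{-\alpha} \lesssim |x|^{-\alpha}$ integrated over a region of volume $\approx |x|^n$ gives $\lesssim |x|^{n-\alpha}$, again producing $\lesssim (1 + |x|)^{n - \alpha - \beta}$. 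Collecting the contributions and recalling $\gamma = \min\{\alpha, \alpha + \beta - n\}$: when $\beta > n$ the region $A_2$ term $(1 + |x|)^{-\alpha}$ dominates and $\gamma = \alpha$; when $\beta < n$ every term is $\lesssim (1 + |x|)^{-(\alpha + \beta - n)}$ and $\gamma = \alpha + \beta - n$; when $\beta = n$ the region $A_2$ term contributes the logarithmic factor while all others are $\lesssim (1 + |x|)^{-\alpha}$, giving the bound $\log(2 + |x|)(1 + |x|)^{-\alpha}$. This reproduces the stated estimates.

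There is no serious obstacle here, as the lemma is elementary; the only care required is bookkeeping, namely correctly tracking which exponent dominates in each regime and isolating the borderline case $\beta = n$, where the near-origin integral over $A_2$ is exactly the source of the logarithm. I would organize the write-up so that the contributions of $A_1$ and $A_3$ are recorded once as $(1 + |x|)^{n - \alpha - \beta}$ and the $\beta$-dependent near-origin integral over $A_2$ is recorded separately, then read off the three cases at the end.
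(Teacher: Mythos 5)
Your argument is correct and follows essentially the same route as the paper: reduce to $|x|\ge 2$, split $\bR^n$ into a region near $x$, a region containing the origin, and a far region, factor out the non-singular factor on each, and identify the near-origin integral of $(1+|y|)^{-\beta}$ as the source of the three cases (and of the logarithm when $\beta=n$). The only cosmetic difference is that the paper's middle region is the annulus $|x|/2\le |y-x|\le 4|x|$ centered at $x$ rather than the ball $|y|\le |x|/2$ centered at the origin, which changes nothing substantive.
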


\begin{proof}
In the following, $C$ denotes some positive constants depending
only on $\alpha, \beta$ and $n$ whose values may change from line to line.

Since $\alpha<n$ and $\alpha+\beta>n$, the inequality is clear for $|x|\le 2$.
So we will assume
  $|x|>2$.  Define,
$$
\Omega_1:=\{y : |y-x|\le |x|/2\};
\  
\Omega_2:= \{y :  |x|/2 \le |y-x|\le 4|x|\};
\  
\Omega_3:=\{ y : 
|y-x|\ge 4|x|\}.
$$
It is easy to see that
$$
|x|/2\le |y|\le 3|x|/2, \ \forall\ y\in \Omega_1;
\ \  |y|\le 5|x|\ \forall\ y\in \Omega_2; \  \
|y-x|\ge \frac 45 |y|\ge \frac {12}5 |x|,\ \forall y\in \Omega_3.
$$
It follows that
\begin{align*}
F(x)&\le  
C
\sum_{ i=1}^3 \int_{ \Omega_i} |x-y|^{-\alpha}(1+|y|)^{-\beta}dy\\ 
&\le  C\left\{
|x|^{-\beta} \int_{ \Omega_1 } |x-y|^{-\alpha} dy
+
|x|^{-\alpha}  \int_{ \Omega_2 } (1+|y|)^{-\beta}dy
+  \int_{ \Omega_3 } |y|^{-\alpha-\beta}dy\right\}\\
&=: I+II+III.
\end{align*}
Since
$$
I+III\le C |x|^{ n-\alpha-\beta },
$$
and
$$
II \le \left\{
\begin{aligned}
&C |x|^{ n-\alpha-\beta }, && \beta < n;\\
&C (\log |x|)|x|^{-\alpha}, && \beta = n;\\
&C |x|^{-\alpha}, && \beta > n.
\end{aligned}
\right.
$$
Lemma \ref{calculus_lemma} is proved.\\
\end{proof}

For $u \in C_d^1(\bR^n)$, since
\begin{equation}\label{u_decay}
|u| \le \|u\|_{C_d^1}(1+|x|)^{-(n-3)} \quad \mbox{and} \quad |\nabla u| \le \|u\|_{C_d^1}(1+|x|)^{-(n-2)},
\end{equation}
the corresponding pressure given by \eqref{pressure} satisfies
$$|p(x)| \le C\left( \|f\|_{C_d^1} + \|u\|^2_{C_d^1} \right) \int_{\bR^n} |x - y|^{- (n-1)} (1 + |y|)^{- (2n-5)} \, dy,$$
where $C > 0$ depends only on $n$. Therefore, by Lemma \ref{calculus_lemma}, we have
\begin{equation}\label{p_decay}
|p(x)| \le \left\{
\begin{aligned}
&C\left( \|f\|_{C_d^1} + \|u\|^2_{C_d^1} \right) \log (2 + |x|) (1 + |x|)^{-(n-1)},  &&\mbox{when}~n = 5,\\
&C\left( \|f\|_{C_d^1} + \|u\|^2_{C_d^1} \right) (1 + |x|)^{-(n-1)}, &&\mbox{when}~n \ge 6,\\
\end{aligned}
\right.
\end{equation}
where $C > 0$ depends only on $n$.
\\

Now we give an initial a-priori estimate for $(u,p)$.

\begin{lemma}\label{base}
For $n \ge 5$, and for $f \in L^\infty(\bR^n)$ with compact support, let $u \in C_d^1(\bR^n)$ be a solution of \eqref{integral_equation} and $p$ be given by \eqref{pressure}, then
\begin{align}
\|\nabla u\|_{L^2(\bR^n)} + \| u \|_{L^{\frac{2n}{n-2}}(\bR^n)} &\le C\|f\|_{L^{\frac{2n}{n+2}}(\bR^n)}, \label{u_base}\\
\| \nabla p\|_{L^{\frac{n}{n-1}}(\bR^n)} + \| p\|_{L^{\frac{n}{n-2}}(\bR^n)} &\le C\left(\|f\|_{L^{\frac{2n}{n+2}}(\bR^n)}^2 + \|f\|_{L^{\frac{n}{n-1}}(\bR^n)}\right), \label{p_base}
\end{align}
where $C > 0$ depends only on $n$.
\end{lemma}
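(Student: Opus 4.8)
The two bounds have different characters: \eqref{u_base} is an energy estimate obtained by testing the equation against $u$, whereas \eqref{p_base} follows from the explicit integral formula \eqref{pressure} for $p$ together with standard singular-integral theory. I would prove them in turn.

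For \eqref{u_base}, the plan is to use that $(u,p)$ solves \eqref{NS}, take the inner product of the momentum equation with $u$, and integrate over the ball $B_R$. Integrating by parts, and using $\operatorname{div}u=0$ to discard the interior contributions of the convective and pressure terms, one arrives at
$$\int_{B_R}|\nabla u|^2 - \int_{\partial B_R}(\partial_\nu u)\cdot u + \tfrac12\int_{\partial B_R}(u\cdot\nu)|u|^2 + \int_{\partial B_R}p\,(u\cdot\nu) = \int_{B_R}f\cdot u.$$
The crux is that the three boundary integrals vanish as $R\to\infty$. Using the pointwise decay \eqref{u_decay} and the pressure decay \eqref{p_decay}, on $\partial B_R$ one has $|(\partial_\nu u)\cdot u|\le CR^{-(2n-5)}$, $|(u\cdot\nu)|u|^2|\le CR^{-3(n-3)}$, and $|p\,(u\cdot\nu)|\le CR^{-(2n-4)}\log R$; multiplying by the surface measure $CR^{n-1}$ gives $CR^{4-n}$, $CR^{8-2n}$, and $CR^{3-n}\log R$, each of which tends to $0$ exactly when $n\ge5$. (The same decay, via \eqref{u_decay}, also guarantees that $\|\nabla u\|_{L^2}$ and $\|u\|_{L^{2n/(n-2)}}$ are finite for $n\ge5$, so the computation is legitimate.) Passing to the limit yields the energy identity $\int_{\bR^n}|\nabla u|^2=\int_{\bR^n}f\cdot u$. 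Then Hölder and the Sobolev inequality $\|u\|_{L^{2n/(n-2)}}\le C\|\nabla u\|_{L^2}$ give $\|\nabla u\|_{L^2}^2\le\|f\|_{L^{2n/(n+2)}}\|u\|_{L^{2n/(n-2)}}\le C\|f\|_{L^{2n/(n+2)}}\|\nabla u\|_{L^2}$; dividing and applying Sobolev once more yields \eqref{u_base}.

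For \eqref{p_base}, I would work directly from \eqref{pressure}, writing the convective term in divergence form as $u_k\partial_k u_j$ and setting $g_j:=f_j-u_k\partial_k u_j$, so that $p(x)=\int_{\bR^n}P_j(x-y)g_j(y)\,dy$. Since $P_j=c\,\partial_j\Phi$ with $\Phi$ the fundamental solution of $-\Delta$, the kernel $P_j$ is homogeneous of degree $-(n-1)$ and represents a Riesz potential of order $1$ (composed with a Riesz transform), while $\partial_l P_j=c\,\partial_l\partial_j\Phi$ is a Calderón--Zygmund kernel. The Hardy--Littlewood--Sobolev inequality then gives $\|p\|_{L^{n/(n-2)}}\le C\|g\|_{L^{n/(n-1)}}$ (note $\tfrac{n-2}{n}=\tfrac{n-1}{n}-\tfrac1n$), and the $L^q$-boundedness of Calderón--Zygmund operators gives $\|\nabla p\|_{L^{n/(n-1)}}\le C\|g\|_{L^{n/(n-1)}}$. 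Finally, $\|g\|_{L^{n/(n-1)}}\le\|f\|_{L^{n/(n-1)}}+\|u\cdot\nabla u\|_{L^{n/(n-1)}}$, and by Hölder $\|u\cdot\nabla u\|_{L^{n/(n-1)}}\le\|u\|_{L^{2n/(n-2)}}\|\nabla u\|_{L^2}$ (since $\tfrac{n-2}{2n}+\tfrac12=\tfrac{n-1}{n}$), which is $\le C\|f\|_{L^{2n/(n+2)}}^2$ by \eqref{u_base}. Combining these gives \eqref{p_base}.

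The Hölder/Sobolev bookkeeping and the identification of $P_j$ and $\partial_l P_j$ as Hardy--Littlewood--Sobolev and Calderón--Zygmund operators are routine. The step requiring genuine care is the justification of the energy identity: one must check that the integrations by parts are valid and that every boundary term truly decays, which rests entirely on the pointwise bounds \eqref{u_decay} and \eqref{p_decay} and is the only place the restriction $n\ge5$ is used. I expect this to be the main (though mild) obstacle.
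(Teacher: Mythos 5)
Your proposal is correct and follows essentially the same route as the paper: an energy estimate on large balls whose error terms are killed by the pointwise decay \eqref{u_decay}--\eqref{p_decay} (the paper uses a smooth cutoff on the annulus $B_{2R}\setminus B_R$ rather than sharp boundary integrals, which is only a cosmetic difference), followed by H\"older, the Sobolev inequality, and potential (Hardy--Littlewood--Sobolev and Calder\'on--Zygmund) estimates applied to the representation \eqref{pressure} for $p$. Your decay-rate bookkeeping for the boundary terms and the exponent checks all match the paper's.
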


\begin{proof}
As mentioned before, $(u,p)$ solves the stationary Navier-Stokes equations \eqref{NS}. The proof follows from a standard energy estimate argument. Let $\eta$ be a smooth cut-off function such that $\eta \equiv 1$ in $B_R$, $\eta \equiv 0$ outside $B_{2R}$, and $|\nabla \eta| \le CR^{-1}$. In the following, $C'$ denotes a constant which is allowed to depend on $u$, but is independent of $R$. We multiply \eqref{NS} by $u \eta^2$, and integrate by parts,
\begin{align*}
\int |\nabla (u \eta)|^2 - \int |u|^2|\nabla \eta|^2 - \int |u|^2 u \cdot \nabla \eta \eta - 2\int p u \cdot \nabla \eta \eta = \int f\cdot u \eta^2.
\end{align*}
By \eqref{u_decay}, \eqref{p_decay}, and H\"older's inequality, we have
\begin{align*}
\int_{B_R} |\nabla u|^2 &\le \frac{C}{R^2} \int_{B_{2R} \setminus B_R} |u|^2 + \frac{C}{R} \int_{B_{2R} \setminus B_R} |u|^3 + \frac{C}{R} \int_{B_{2R} \setminus B_R} |p| |u| + \int_{B_{2R}} |f| |u|\\
&\le \frac{C'}{R^{n-4}} + \frac{C'}{R^{2n-8}} + \frac{C' \log R}{R^{n-3}} +  \|f\|_{L^{\frac{2n}{n+2}}(B_{2R})} \|u\|_{L^{\frac{2n}{n-2}}(B_{2R})},
\end{align*}
when $R$ is large. Taking $R \to \infty$ will yield
$$\| \nabla u \|_{L^2(\bR^n)}^2 \le \|f\|_{L^{\frac{2n}{n+2}}(\bR^n)} \|u\|_{L^{\frac{2n}{n-2}}(\bR^n)} \le C\|f\|_{L^{\frac{2n}{n+2}}(\bR^n)} \| \nabla u\|_{L^2(\bR^n)}$$
by Poincare inequality, which implies
$$\|u\|_{L^{\frac{2n}{n-2}}(\bR^n)} + \| \nabla u\|_{L^2(\bR^n)} \le C\|f\|_{L^{\frac{2n}{n+2}}(\bR^n)}.$$
Then we have
$$\| (u \cdot \nabla) u \|_{L^{\frac{n}{n-1}}(\bR^n)} \le \|u\|_{L^{\frac{2n}{n-2}}(\bR^n)} \|\nabla u\|_{L^2(\bR^n)} \le C\|f\|^2_{L^{\frac{2n}{n+2}}(\bR^n)},$$
and \eqref{p_base} follows from potential estimates applying on the representation \eqref{pressure}.\\
\end{proof}

We denote the total head pressure by
$$\theta:= \frac{|u|^2}{2} + p.$$
This quantity has played an important role in the study of the stationary Navier-Stokes equations. It was already observed by Gilbarg and Weinberger \cite{GW} that it satisfies an elliptic equation
\begin{equation}\label{headpressure}
-\Delta \theta + u \cdot \nabla \theta = - |\partial_i u_j - \partial_j u_i|^2 + f\cdot u - \mbox{div }f.
\end{equation}
The following a-priori estimate on $u$ in terms of the $L^r$ norm of $\theta_+ :=\max\{\theta,0\}$, $r > n/2$,  can be deduced from the work \cite{FR0,FR1,TX}.

\begin{proposition}\label{aprioriestimate}
For $n \ge 2$, $r > n/2$, and $f \in L^\infty(B_1)$, let $(u,p)$ be a regular solution of the stationary Navier-Stokes equations
\begin{equation}\label{NS_B1}
\left\{
\begin{aligned}
- \Delta u + (u \cdot \nabla) u + \nabla p &= f\\
\mbox{div }u &= 0
\end{aligned}
\right.
\quad \mbox{in } B_1:= \{x \in \bR^n~\big|~|x|<1\}.
\end{equation}
Assume
$$\|u\|_{W^{1,2}(B_1)} + \|p\|_{W^{1,n/(n-1)}(B_1)} + \|f\|_{L^\infty(B_1)}+ \|\theta_+\|_{L^r(B_1)} \le C_0$$
for some constant $C_0$, then
\begin{equation}\label{apriori}
\|u\|_{L^\infty(B_{1/2})} + \|\nabla u\|_{L^\infty(B_{1/2})} \le C,
\end{equation}
where $C > 0$ depends on $n$, $C_0$, and a positive lower bound of $r - n/2$.
\end{proposition}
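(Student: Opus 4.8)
The plan is to exploit the elliptic equation \eqref{headpressure} for the total head pressure $\theta = |u|^2/2 + p$. The decisive structural feature is that the principal nonlinear term $-|\partial_i u_j - \partial_j u_i|^2$ on its right-hand side is nonpositive, so that $\theta$ is a distributional subsolution: $-\Delta\theta + u\cdot\nabla\theta \le f\cdot u - \operatorname{div} f$ in $B_1$, with convective drift $u$ and forcing built from $f$. I would first record the pointwise identity $|u|^2 = 2(\theta - p)$, which is what lets one convert integrability of $\theta_+$ (and of $p$) into integrability of the drift $u$; together with the hypothesis $\theta_+ \in L^r(B_1)$, $r > n/2$, this is the mechanism that renders the convection effectively subcritical even though $n \ge 5$ is super-critical in the energy class $W^{1,2}$.

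The core of the argument is a De Giorgi--Nash--Moser iteration on the subsolution $\theta$. Testing the subsolution inequality against $\phi = \theta_+^{2\beta - 1}\eta^2$, with $\eta$ a cutoff, and integrating by parts using $\operatorname{div} u = 0$, the convective contribution becomes $\int (\theta_+^{2\beta}/2\beta)\, u\cdot\nabla(\eta^2)$, which is estimated by products of a norm $\|u\|_{L^{2s}}$ with an appropriate power of $\|\theta_+\|_{L^{q}}$. Since $\big\| |u|^2 \big\|_{L^s} \le 2\|\theta_+\|_{L^s} + 2\|p\|_{L^s}$ and $r > n/2$ furnishes a strictly positive Sobolev gain, these terms are absorbable and the iteration gains integrability at each step, upgrading $\theta_+ \in L^r$ to $\theta_+ \in L^\infty(B_{3/4})$. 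The pressure is carried along in tandem: $p \le \theta \le \theta_+$ controls $p_+$ directly, while $p_-$ is controlled through the pressure Poisson equation $-\Delta p = \partial_i\partial_j(u_i u_j)$ and the Calder\'on--Zygmund estimate, fed by the current integrability of $u$. Once $\theta_+ \in L^\infty(B_{3/4})$, the identity $|u|^2/2 = \theta - p \le \theta_+ + |p|$ together with $p \le \theta_+$ and the pressure estimate closes the loop and yields $\|u\|_{L^\infty(B_{1/2})} \le C$; alternatively, one may bound the scale-invariant quantity $\rho^{4-n}\int_{B_\rho}|\partial_i u_j - \partial_j u_i|^2$ from \eqref{headpressure} and invoke the Tian--Xin $\varepsilon$-regularity criterion \cite{TX} to reach the same $L^\infty$ bound on $u$.

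With $u \in L^\infty(B_{3/4})$ in hand, the gradient bound follows from standard Stokes regularity. Writing the system \eqref{NS_B1} as the linear Stokes problem $-\Delta u + \nabla p = f - (u\cdot\nabla)u$, $\operatorname{div} u = 0$, the right-hand side lies in $L^2$ (since $u$ is bounded and $\nabla u \in L^2$), giving $u \in W^{2,2}_{loc}$ and hence $\nabla u \in L^{2n/(n-2)}_{loc}$. Iterating---at each step $(u\cdot\nabla)u$ gains the integrability of $\nabla u$ because $u$ is bounded, and $f \in L^\infty$---raises $\nabla u$ to $L^q_{loc}$ for every $q < \infty$, so $\nabla u \in C^\alpha(B_{1/2}) \subset L^\infty(B_{1/2})$, with constants depending only on $n$, $C_0$, and a positive lower bound for $r - n/2$.

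The main obstacle is the iteration in the second paragraph. Because the energy bound only places $u$ in $L^{2n/(n-2)}$, which fails to reach $L^n$ for $n \ge 5$, the convective drift is not admissible for the classical local-boundedness theory, and one cannot treat \eqref{headpressure} as a generic linear equation with a good drift. The argument must genuinely use the coupling $|u|^2 = 2(\theta - p)$ so that the drift is controlled by the very quantity being iterated, and must keep the pressure bootstrap synchronized with the iteration of $\theta_+$ so that no step is circular; verifying that the Sobolev gain from $r > n/2$ dominates the loss incurred by the convective and pressure terms, uniformly along the iteration, is the delicate point inherited from \cite{FR0,FR1,TX}.
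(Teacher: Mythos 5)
Your strategy---a Moser iteration on the head-pressure subsolution coupled to a pressure bootstrap, then a Stokes bootstrap for $\nabla u$---is not the paper's route, and it has a genuine gap at exactly the point you flag as ``delicate''. After integrating by parts, the convective contribution is $\frac{1}{2\beta}\int \theta_+^{2\beta}\,u\cdot\nabla(\eta^2)$, and a direct H\"older/Sobolev count shows that this term gains integrability along the iteration only if the drift satisfies $u\in L^m_{loc}$ with $m>n/2$. The hypotheses give only $u\in L^{2n/(n-2)}$ and $p\in W^{1,n/(n-1)}$, and $2n/(n-2)>n/2$ precisely when $n<6$. Your coupling $|u|^2\le 2\theta_+ + 2p_-$ fixes the $\theta_+$ half (since $\sqrt{\theta_+}\in L^{2r}$ with $2r>n$), but the $p_-$ half is where the argument breaks: the only tool you propose for $p_-$ is the pressure Poisson equation plus Calder\'on--Zygmund, which yields $\|p\|_{L^s_{loc}}\le C\,\||u|^2\|_{L^s}+\dots$ with a constant $C\ge 1$; substituting $\||u|^2\|_{L^s}\le 2\|\theta_+\|_{L^s}+2\|p_-\|_{L^s}$ produces $\|p_-\|_{L^s}\le 2C\|p_-\|_{L^s}+\dots$, which cannot be absorbed. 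Equivalently, the pressure estimate never returns more integrability on $|u|^2$ than you feed into it, so the drift remains stuck at $L^{2n/(n-2)}$ and the iteration fails to close for $n\ge 6$, while the proposition is asserted for all $n\ge 2$. That converting integrability of $\theta_+$ into integrability of $u$ is genuinely hard is corroborated by the paper itself: Lemmas \ref{p_weighted_lemma} and \ref{theta_u_relation_2_lemma} need the special test function $|p|^{(q-2)/2}\mathrm{sgn}(p)$ and the restriction $q<4$, which is the sole source of the bound $n\le 15$.

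The paper's proof avoids the iteration entirely. It quotes the weighted estimate of Frehse--R\r{u}\v{z}i\v{c}ka \cite{FR1}, $\int_{B_R(x_0)}|\nabla u|^2|x-x_0|^{4-n}\le CR^{\beta}$ with $\beta>0$ (obtained by testing the Navier--Stokes system against Green-function-type weights, where the cubic and pressure terms combine into $\theta$ and are controlled by $\|\theta_+\|_{L^r}$, $r>n/2$), deduces the Morrey decay $R^{4-n}\int_{B_R(x_0)}|\nabla u|^2\le C_1R^{\beta}$, and then applies the Tian--Xin $\varepsilon$-regularity criterion (Theorem \ref{regularity_criterion}) at scales $R<R_0$ with $C_1R_0^{\beta}<\varepsilon_0$ to conclude $\|\nabla u\|_{L^\infty(B_{1/2})}\le C$; the bound on $\|u\|_{L^\infty(B_{1/2})}$ then follows by interpolation. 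Your closing remark about bounding a scale-invariant vorticity quantity and invoking Tian--Xin points toward this route, but the essential input there is the \emph{decay rate} $R^{\beta}$ of that quantity as $R\to 0$, not merely its finiteness, and your sketch does not produce it. Your final step (Stokes bootstrap once $u\in L^\infty$) is fine; the gap is concentrated in the passage from $\theta_+\in L^r$ to $u\in L^\infty$.
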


\begin{remark}
We will give in this section a short proof of Proposition \ref{aprioriestimate} using results from \cite{FR1} and \cite{TX}. Proposition \ref{aprioriestimate} can also be deduced through arguments in \cite{FR1} and \cite{FR6}, which will be presented in the Appendix.
\end{remark}

Before proving Proposition \ref{aprioriestimate}, let us recall the previously mentioned $\varepsilon$-regularity criterion by Tian and Xin:

\begin{customthm}{A}[\hspace{-0.01in}\cite{TX}]\label{regularity_criterion}
For $n \ge 2$ and $f \in L^\infty(B_1)$, let $(u,p)$ be a regular solution of \eqref{NS_B1}, with
$$\|u\|_{L^2(B_1)} \le M_0,$$
for some constant $M_0$.
There is a positive constant $\varepsilon_0$ depending only on $n$ and $M_0$, such that if for some $R_0 > 0$,
$$r^{-(n-4)}\int_{B_r(x_0)}|\partial_i u_j - \partial_j u_i|^2 < \varepsilon_0, \quad \mbox{for all } 0 < r < R_0,~ x_0 \in B_{1/2},$$
then there exists a positive constant $R_1$ depending only on $n, R_0, M_0$, and an upper bound of $\|f\|_{L^\infty(B_1)}$, such that
$$\sup_{B_{r/2}}|\nabla u| \le Cr^{-2}, \quad \mbox{for all } 0 < r < R_1,$$
where $C$ is a positive constant depending only on $n$, $M_0$ and an upper bound of $\|f\|_{L^\infty(B_1)}$.\\
\end{customthm}

\begin{remark}
The corresponding theorem stated in \cite{TX} is for $f = 0$. However, their proof can be modified to allow nonzero $f$. Indeed, we can replace their representation formula (2.20) on \cite[Page 227]{TX} by
$$w(x) = (\nabla \Gamma \ast ((n-1)w \wedge u + ^{\ast}f))(x) + H_1(x), \quad x \in B_{1/2},$$
with the convolution integral over $B_1$. Here $\Gamma$ is the fundamental solution of the Laplace equation, $w(x) = ^{\ast}du(x)$, $^{\ast}$ denotes the Hodge star operator, and $H_1$ is harmonic in $B_1$. This representation formula can be found in Section 4 of \cite{Ser}.\\
\end{remark}

\begin{proof}[Proof of Proposition \ref{aprioriestimate}]
Following the arguments in the proof of Theorem 1.5 in \cite{FR1}, we have,
\begin{equation}\label{grad_u_weighted}
\int_{B_R(x_0)} \frac{|\nabla u|^2}{|x - x_0|^{n-4}} \le CR^\beta  \quad \mbox{for any}~x_0 \in B_{1/2}, 0<R <1/4,
\end{equation}
where $C$ and $\beta$ are positive constants depending only on $n$, $C_0$, and a positive lower bound of $r - n/2$; see the last line of page 372 and the first two lines of page 373 for the statement, as well as Lemma 3.5, Lemma 3.1 and (2.14) in the paper.

It follows from \eqref{grad_u_weighted} that
\begin{equation}\label{u_morrey}
\frac{1}{R^{n-4}} \int_{B_R(x_0)} |\nabla u|^2 \le C_1R^\beta \quad \mbox{for any}~x_0 \in B_{1/2}, 0<R <1/4.
\end{equation}
We choose $\varepsilon_0$ as in Theorem \ref{regularity_criterion} with $M_0 = \|u\|_{L^2(B_1)}$, and choose $R_0$ satisfying $C_1 R_0^\beta < \varepsilon_0$. Then
$$\frac{1}{R^{n-4}} \int_{B_R(x_0)} |\nabla u|^2 \le \varepsilon_0 \quad \mbox{for any}~x_0 \in B_{1/4}, 0<R <R_0.$$
By Theorem \ref{regularity_criterion}, we have
$$|\nabla u(0)| \le C,$$
where $C>0$ depends on $n, r,$ and $C_0$. Since the problem is translation invariant, we actually have
$$\|\nabla u\|_{L^\infty(B_{1/2})} \le C.$$
Boundedness of $u$ follows from the interpolation inequality:
$$\|u\|_{L^\infty(B_{1/2})} \le C \left( \|u\|_{L^2(B_{1/2})} + \|\nabla u\|_{L^\infty(B_{1/2})} \right) \le C.$$
\end{proof}

Next, we prove the following proposition:

\begin{proposition}\label{theta}
For $5 \le n \le 15$, and $f \in L^\infty(\bR^n)$ with compact support, let $u \in C_d^1(\bR^n)$ be a solution of \eqref{integral_equation} and $p$ be given by \eqref{pressure}. Then there exists an $r > n/2$ depending only on $n$, such that
\begin{align*}
 \| \theta_+ \|_{L^r (\bR^n)} &\le C(r,f),
\end{align*}
where $C(r,f) > 0$ depends only on $n$, $r$, an upper bound of the diameter of supp$(f)$, and an upper bound of $\|f\|_{L^\infty}$.
\end{proposition}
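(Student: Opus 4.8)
The plan is to exploit the elliptic equation \eqref{headpressure} satisfied by the total head pressure $\theta$, whose right-hand side contains the favorable (nonpositive) term $-|\partial_i u_j - \partial_j u_i|^2$ together with a source $g := f\cdot u - \mathrm{div}\,f$ supported in $\mathrm{supp}(f)$. The natural starting point is a base level of integrability. Since $f$ has compact support, $\|f\|_{L^{2n/(n+2)}} + \|f\|_{L^{n/(n-1)}}$ is controlled by $\|f\|_{L^\infty}$ and the diameter of $\mathrm{supp}(f)$, so Lemma \ref{base} bounds $\|u\|_{L^{2n/(n-2)}}$ and $\|p\|_{L^{n/(n-2)}}$. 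Because $\||u|^2\|_{L^{n/(n-2)}} = \|u\|_{L^{2n/(n-2)}}^2$, this yields $\theta \in L^{n/(n-2)}$, hence $\theta_+ \in L^{q_0}$ with $q_0 = n/(n-2)$, with norm bounded solely in terms of the admissible quantities.

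I would then run a Moser--Frehse--R\r{u}\v{z}i\v{c}ka iteration on \eqref{headpressure} to push the integrability of $\theta_+$ above the scaling-critical exponent $n/2$. For $s \ge 1$, I test \eqref{headpressure} against $\theta_+^{s}$, justifying the integrations by truncation together with the decay of $u \in C_d^1(\bR^n)$, which guarantees that $\theta_+^{(s+1)/2}$ and its gradient lie in the relevant spaces. Two structural features are decisive: first, $\mathrm{div}\,u = 0$ makes the convection term vanish, since $\int (u\cdot\nabla\theta)\,\theta_+^s = \tfrac{1}{s+1}\int u\cdot\nabla(\theta_+^{s+1}) = 0$; second, the term $-|\partial_i u_j - \partial_j u_i|^2$ has a good sign and may be discarded (or, when needed, retained to control the weighted enstrophy $\int |\partial_i u_j - \partial_j u_i|^2\,\theta_+^s$). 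What remains is
\begin{equation*}
\frac{4s}{(s+1)^2} \int_{\bR^n} \big|\nabla \theta_+^{(s+1)/2}\big|^2 \le \int_{\bR^n} g\, \theta_+^s ,
\end{equation*}
and the Sobolev inequality converts the left side into $\|\theta_+\|_{L^{(s+1)n/(n-2)}}^{s+1}$.

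The right-hand side is where compact support enters: $\int g\,\theta_+^s$ is an integral over $\mathrm{supp}(f)$, so by H\"older's inequality it is dominated by a local norm $\|\theta_+\|_{L^{\sigma}(\mathrm{supp}(f))}^{s}$ times a factor controlled by $\|f\|_{L^\infty}$, the $\mathrm{div}\,f$ contribution being handled by transferring the derivative onto $\theta_+^s$ and absorbing the resulting $\nabla\theta_+$ via the good enstrophy term. Provided $\sigma$ does not exceed the integrability reached at the previous stage, this closes one step and upgrades the exponent from $q_k$ to $q_{k+1} = (s+1)n/(n-2)$. To convert weighted gradient control back into integrability of $\theta$ itself, I would combine this with the pressure representation: \eqref{NS} gives $-\Delta p = \partial_i\partial_j(u_i u_j) - \mathrm{div}\,f$, whence $\|\theta\|_{L^q} \le C\|u\|_{L^{2q}}^2 + C(f)$ for $1 < q < \infty$, closing the feedback loop between $u$ and $\theta$.

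The main obstacle is the bookkeeping of exponents. Starting from the subcritical level $q_0 = n/(n-2)$, each iteration step gains a factor close to $n/(n-2)$ but loses a controlled amount at the H\"older step on $\mathrm{supp}(f)$, and the scheme reaches some $r > n/2$ after finitely many steps only when the per-step gain outpaces this loss. That balance is exactly what fails beyond $n = 15$, so the restriction $5 \le n \le 15$ is precisely the range in which the iteration crosses the critical exponent $n/2$. Secondary technical points are the rigorous justification of the integrations by parts on $\bR^n$ via cutoffs and the decay afforded by $u \in C_d^1(\bR^n)$, and the treatment of the $\mathrm{div}\,f$ term for forcing that is merely bounded.
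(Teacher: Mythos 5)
Your first half is essentially Lemma \ref{theta_u_relation_1_lemma} of the paper: testing \eqref{headpressure} with powers of $\theta_+$, using $\mathrm{div}\,u=0$, discarding the signed vorticity term, and Sobolev on the left does yield
$\|\theta_+\|_{L^{(s+1)n/(n-2)}(\bR^n)}^{s+1}\le C\int_{\mathrm{supp}(f)}|f||u|\,\theta_+^s+\dots$,
and that part is sound. The gap is in how you close the loop. The source term is $f\cdot u\,\theta_+^s$, not merely a function of $\theta_+$, so H\"older leaves you with a factor $\|u\|_{L^q(\mathrm{supp}(f))}$ for some $q$ that must grow with $s$; the only a priori control you have on $u$ is $\|u\|_{L^{2n/(n-2)}}$ from Lemma \ref{base}, which is far below $L^{n/4}$-type integrability for $n$ in the upper part of the range. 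Your proposed ``feedback'' inequality, $\|\theta\|_{L^q}\le C\|u\|_{L^{2q}}^2+C(f)$ from the pressure equation, points in the \emph{wrong direction}: it bounds $\theta$ by $u$, whereas what is needed to iterate is a bound of $\|u\|_{L^q(\mathrm{supp}(f))}$ by a \emph{sublinear} power of $\|\theta_+\|_{L^{nq/(n-2q)}}$. Without such a reverse estimate the scheme is circular (to reach $\theta_+\in L^r$ with $r>n/2$ via your inequality you would need $u\in L^{2r}\subset L^n$ locally, which is essentially the regularity you are trying to prove).

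The paper supplies exactly this missing reverse estimate, and it is the heart of the proof. Lemma \ref{p_weighted_lemma} tests the pressure equation $-\Delta p=\partial_iu^j\partial_ju^i-\mathrm{div}\,f$ against the weight $\eta_1|x-y|^{-s}$ to get $\int_{B_{R_0+2}}|p|\,|x-y|^{-(s+2)}dx\le C(\|\theta_+\|_{L^{nq/(n-2q)}}+1)$ uniformly in $y$ (here the sign structure of $\theta$ versus $|u|^2$ is used, not the head-pressure equation). Lemma \ref{theta_u_relation_2_lemma} then builds the auxiliary potential $-\Delta\varphi=|p|^{(q-2)/2}\mathrm{sgn}(p)\eta_2$, bounds $\|\varphi\|_{L^\infty}$ by the $\tfrac{q-2}{2}$ power of that weighted quantity, and tests the pressure equation against $\varphi\eta_2$ to obtain $\|u\|_{L^q(B_{R_0})}\le C(\|\theta_+\|^{(q-2)/(2q)}+\|\theta_+\|^{1/2}+1)$, with exponents strictly below $1$ so that Young's inequality closes the loop in a single step rather than by iteration. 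This is also where the true source of the restriction $n\le 15$ lies: the H\"older splitting $\tfrac{q-2}{2}+\tfrac{4-q}{2}=1$ in the construction of $\varphi$ forces $2<q<4$, while $q>n/4$ is needed for $\tfrac{nq}{n-2q}>\tfrac n2$, and the interval $(\max\{2,n/4\},\min\{4,n/2\})$ is nonempty iff $n<16$. Your heuristic that $n\le15$ is where ``the per-step gain outpaces the loss'' of an iteration does not reflect this mechanism and is not substantiated; a crude iteration using only the $L^{2n/(n-2)}$ bound on $u$ stalls well before $n=15$.
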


The following a-priori estimate is a consequence of Proposition \ref{aprioriestimate}, Lemma \ref{base} and Proposition \ref{theta}.

\begin{corollary}\label{u_C1_coro}
For $5 \le n \le 15$, and $f \in L^\infty(\bR^n)$ with compact support, let $u \in C_d^1(\bR^n)$ be a solution of \eqref{integral_equation}. Then
\begin{equation}\label{u_C1}
\| u \|_{L^\infty(\bR^n)} + \| \nabla u \|_{L^\infty (\bR^n)} \le C(f),
\end{equation}
where $C(f) > 0$ depends only on $n$, an upper bound of the diameter of supp$(f)$, and an upper bound of $\|f\|_{L^\infty}$.\\
\end{corollary}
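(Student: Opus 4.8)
The plan is to deduce the global bound \eqref{u_C1} by applying the interior estimate of Proposition \ref{aprioriestimate} on every unit ball $B_1(x_0)$, $x_0 \in \bR^n$, with a constant $C_0$ that is \emph{uniform} in $x_0$, and then covering $\bR^n$ by the resulting half-balls $B_{1/2}(x_0)$. Since $(u,p)$ is a regular solution of \eqref{NS}, its translate $(u(\cdot+x_0), p(\cdot+x_0))$ is a regular solution of \eqref{NS_B1} with force $f(\cdot+x_0)$, so the proposition is available on each ball once its hypotheses are checked with a uniform constant.

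First I would record that the $L^q(\bR^n)$ norms of $f$ entering Lemma \ref{base} are all controlled in the desired quantities: because $f$ is supported in a set of bounded diameter, $\|f\|_{L^q(\bR^n)} \le \|f\|_{L^\infty} |\mathrm{supp}(f)|^{1/q}$ is bounded by a constant depending only on $n$, $\mathrm{diam}(\mathrm{supp}\,f)$ and $\|f\|_{L^\infty}$ for every $q \in [1,\infty)$. Hence the right-hand sides of \eqref{u_base} and \eqref{p_base} are bounded by such a constant, giving global control of $\|\nabla u\|_{L^2(\bR^n)}$, $\|u\|_{L^{2n/(n-2)}(\bR^n)}$, $\|\nabla p\|_{L^{n/(n-1)}(\bR^n)}$ and $\|p\|_{L^{n/(n-2)}(\bR^n)}$. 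Combined with Proposition \ref{theta}, which supplies an $r > n/2$ with $\|\theta_+\|_{L^r(\bR^n)} \le C(r,f)$, these are the quantities to feed into Proposition \ref{aprioriestimate}.

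The step requiring care is verifying the hypothesis of Proposition \ref{aprioriestimate} on each $B_1(x_0)$ with a single $C_0$: the proposition asks for $\|u\|_{W^{1,2}(B_1)}$ and $\|p\|_{W^{1,n/(n-1)}(B_1)}$, whereas Lemma \ref{base} provides only the higher-integrability norms $L^{2n/(n-2)}$ for $u$ and $L^{n/(n-2)}$ for $p$ globally (the lower-order norms $\|u\|_{L^2(\bR^n)}$, $\|p\|_{L^{n/(n-1)}(\bR^n)}$ need not even be finite). On a bounded ball, Hölder's inequality upgrades these: since $\tfrac{2n}{n-2} > 2$ and $\tfrac{n}{n-2} > \tfrac{n}{n-1}$, one has $\|u\|_{L^2(B_1(x_0))} \le |B_1|^{1/n}\|u\|_{L^{2n/(n-2)}(\bR^n)}$ and $\|p\|_{L^{n/(n-1)}(B_1(x_0))} \le |B_1|^{1/n}\|p\|_{L^{n/(n-2)}(\bR^n)}$, uniformly in $x_0$. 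Adding $\|\nabla u\|_{L^2(B_1(x_0))} \le \|\nabla u\|_{L^2(\bR^n)}$, $\|\nabla p\|_{L^{n/(n-1)}(B_1(x_0))} \le \|\nabla p\|_{L^{n/(n-1)}(\bR^n)}$, $\|f\|_{L^\infty(B_1(x_0))} \le \|f\|_{L^\infty(\bR^n)}$ and $\|\theta_+\|_{L^r(B_1(x_0))} \le \|\theta_+\|_{L^r(\bR^n)}$, I obtain a bound $C_0$ depending only on $n$, $\mathrm{diam}(\mathrm{supp}\,f)$ and $\|f\|_{L^\infty}$, valid on every ball.

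Finally, Proposition \ref{aprioriestimate} (applied to the translate centered at $x_0$, which has identical norms) yields $\|u\|_{L^\infty(B_{1/2}(x_0))} + \|\nabla u\|_{L^\infty(B_{1/2}(x_0))} \le C$ with $C$ independent of $x_0$; taking the supremum over $x_0 \in \bR^n$, whose half-balls cover $\bR^n$, gives \eqref{u_C1}. The only genuine obstacle is the uniformity of $C_0$, that is, the Hölder upgrade from the global higher-integrability bounds of Lemma \ref{base} to the $W^{1,2}$ and $W^{1,n/(n-1)}$ norms demanded by Proposition \ref{aprioriestimate}; the remainder is translation invariance and a covering argument.
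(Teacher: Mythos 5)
Your proposal is correct and is precisely the argument the paper intends: the corollary is stated as a consequence of Proposition \ref{aprioriestimate}, Lemma \ref{base} and Proposition \ref{theta}, applied on translated unit balls with a uniform constant and a covering of $\bR^n$ by half-balls. Your observation about upgrading the global $L^{2n/(n-2)}$ and $L^{n/(n-2)}$ bounds to the local $L^2$ and $L^{n/(n-1)}$ norms via H\"older on a unit ball correctly fills in the one detail the paper leaves implicit.
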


We will prove Proposition \ref{theta} through the following lemmas.

\begin{lemma}\label{theta_u_relation_1_lemma}
For $f \in L^\infty(\bR^n)$ with compact support, let $u \in C_d^1(\bR^n)$ be a solution of \eqref{integral_equation} and $p$ be given by \eqref{pressure}. Then for any $\frac{2n}{n+2} \le q < \frac{n}{2}$, we have
\begin{equation}\label{theta_u_relation_1}
\| \theta_+ \|_{L^{\frac{nq}{n-2q}}(\bR^n)} \le C(q,f) \left( \|u\|_{L^q(\{supp(f)\})} + 1 \right),
\end{equation}
where $C(q,f) > 0$ depends only on $n, q,$ an upper bound of the diameter of supp$(f)$, and an upper bound of $\|f\|_{L^\infty}$.
\end{lemma}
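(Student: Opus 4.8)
The plan is to derive the estimate directly from the total-head-pressure equation \eqref{headpressure} by a single energy identity, testing against a carefully tuned power of $\theta_+$. The key constraint is that we cannot invoke $L^p$ elliptic (Calderón--Zygmund) estimates for the operator $-\Delta + u\cdot\nabla$: at this stage $u$ is only known to lie in $C_d^1(\bR^n)$ with no quantitative bound on its norm, and boundedness of $u$ is precisely what we are ultimately after. Every estimate must therefore be of energy type and must exploit the two favourable structural features of \eqref{headpressure}: the enstrophy term $-|\partial_i u_j - \partial_j u_i|^2$ has a good sign, and the drift $u\cdot\nabla\theta$ is divergence-free, so that it is annihilated when tested against any function of $\theta$.

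Write $g := f\cdot u - \mathrm{div}\,f$, which is supported in $\mathrm{supp}(f)$, set $q^* := \frac{nq}{n-2q}$, and choose the exponent $m := \frac{q(n-2)}{n-2q}$; since $q\in[\tfrac{2n}{n+2},\tfrac n2)$ one checks that $m\ge 2$. I would multiply \eqref{headpressure} by $\theta_+^{m-1}$ (regularized by a cutoff $\eta_R$ as in the proof of Lemma \ref{base}), integrate by parts, and let $R\to\infty$. Using $\mathrm{div}\,u=0$, the drift term becomes $\tfrac1m\int u\cdot\nabla(\theta_+^m)\eta_R^2 = -\tfrac2m\int\theta_+^m\eta_R\,u\cdot\nabla\eta_R$, which vanishes in the limit by the decay of $u$ and $\theta$; discarding the nonnegative enstrophy term then leaves
\begin{equation*}
\frac{4(m-1)}{m^2}\int_{\bR^n}|\nabla\theta_+^{m/2}|^2 \le \int_{\mathrm{supp}(f)}(f\cdot u)\,\theta_+^{m-1} + \int f\cdot\nabla(\theta_+^{m-1}).
\end{equation*}

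The crux is that $m$ has been tuned so that this inequality is self-improving. By the Sobolev inequality the left-hand side dominates $c\|\theta_+^{m/2}\|_{L^{2n/(n-2)}}^2 = c\|\theta_+\|_{L^{mn/(n-2)}}^m = c\|\theta_+\|_{L^{q^*}}^m$, since $\frac{mn}{n-2}=q^*$. For the first term on the right, Hölder's inequality with exponents $q$ and $q'$ gives $\|f\|_\infty\|u\|_{L^q(\mathrm{supp}(f))}\|\theta_+\|_{L^{(m-1)q'}(\mathrm{supp}(f))}^{m-1}$, and the very same choice of $m$ produces the identity $(m-1)q'=q^*$, so this term is at most $\|f\|_\infty\|u\|_{L^q(\mathrm{supp}(f))}\|\theta_+\|_{L^{q^*}}^{m-1}$. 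The second term, after expressing $\nabla(\theta_+^{m-1})$ through $\nabla\theta_+^{m/2}$, is handled by Cauchy--Schwarz and Young: the factor $\|\nabla\theta_+^{m/2}\|_{L^2}$ is absorbed into the left-hand side, leaving a multiple of $\|\theta_+\|_{L^{m-2}(\mathrm{supp}(f))}^{m-2}$ (a pure constant when $m=2$), which, since $m-2<q^*$ and $\mathrm{supp}(f)$ is bounded, is controlled by $C(f)\|\theta_+\|_{L^{q^*}}^{m-2}$. Setting $Y:=\|\theta_+\|_{L^{q^*}(\bR^n)}$, which is finite because $u\in C_d^1(\bR^n)$ forces algebraic decay of $\theta$, all of this collapses to $Y^m\le C\|u\|_{L^q(\mathrm{supp}(f))}Y^{m-1}+C(f)Y^{m-2}+C(f)$. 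Distinguishing the trivial case $Y\le 1$ from the case $Y\ge 1$, where the lower powers of $Y$ are dominated by $Y^{m-1}$, one concludes $Y\le C(f)\big(\|u\|_{L^q(\mathrm{supp}(f))}+1\big)$, which is the assertion.

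The main difficulty is not any one computation but the bookkeeping needed to keep the constant independent of $\|u\|_{C_d^1}$: the final bound may depend on $u$ only through the single explicit, linearly-occurring factor $\|u\|_{L^q(\mathrm{supp}(f))}$. This is exactly what forces the argument to be a one-shot energy estimate with the test function $\theta_+^{m-1}$ --- the divergence-free cancellation removes every hidden occurrence of $u$ in the drift, the good sign of the enstrophy lets us discard the only other $u$-dependent bulk term, and the tuning $\frac{mn}{n-2}=(m-1)q'=q^*$ makes the inequality close on itself rather than requiring an iteration whose constants could accumulate powers of $\|u\|$. The remaining points are routine: justifying via cutoffs and the decay of $u$ and $p$ that $\theta_+^{m-1}$ is an admissible test function and that the boundary contributions vanish as $R\to\infty$.
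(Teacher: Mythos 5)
Your proposal is correct and follows essentially the same route as the paper: testing \eqref{headpressure} against $\theta_+^{m-1}\eta^2$ with $m-1=s=\frac{nq-n}{n-2q}$, discarding the signed enstrophy term, killing the drift via $\mathrm{div}\,u=0$ and the decay of $u$ and $\theta$, and closing with Sobolev plus the exponent identities $\frac{mn}{n-2}=(m-1)q'=\frac{nq}{n-2q}$. The bookkeeping of the boundary terms and the absorption of the $f\cdot\nabla\theta_+$ term by Young's inequality also match the paper's argument.
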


\begin{remark}
On $\bT^n$, the estimate
$$\| \theta_+ \|_{L^{\frac{nq}{n-2q}}(\bT^n)} \le C \left( \|u\|_{L^q(\bT^n)} + 1 \right)$$
was proved in \cite[Proposition 3.3]{FR4}.
\end{remark}

\begin{proof}
Let $\eta$ be a smooth cut-off function such that $\eta \equiv 1$ in $B_R$, $\eta \equiv 0$ outside $B_{2R}$, and $|\nabla \eta| \le CR^{-1}$. In the following, we use $C'$ to denote a constant which is allowed to depend on $u$, but is independent of $R$.
Note that by Lemma \ref{base}, \eqref{u_decay}, \eqref{p_decay}, and the definition of $\theta$, we have
\begin{equation}\label{theta_properties}
|\theta| \le \frac{C'\log (2 + |x|)}{(1+|x|)^{n-1}} \quad \mbox{and} \quad \| \nabla \theta\|_{L^{\frac{n}{n-1}}} \le C\left(\|f\|_{L^{\frac{2n}{n+2}}}^2 + \|f\|_{L^{\frac{n}{n-1}}}\right),
\end{equation}
where $C$ is a positive constant depending only on $n$.  Take 
\begin{equation}\label{s_formula}
s = \frac{nq-n}{n-2q},
\end{equation}
we multiply \eqref{headpressure} by $\theta_+^s \eta^2$ and integrate by parts, we have
\begin{align*}
&s\int_{\bR^n} |\nabla \theta_+|^2 \theta_+^{s-1} \eta^2 + 2\int_{\bR^n}\nabla \theta_+ \cdot \nabla \eta \theta_+^s \eta - \frac{2}{s+1}\int_{\bR^n} u \cdot \nabla \eta \theta_+^{s+1} \eta\\
&= -\int_{\bR^n} |\partial_i u_j - \partial_j u_i|^2 \theta_+^s \eta^2 + \int_{\bR^n} f\cdot u \theta_+^s \eta^2 + s\int_{\bR^n} f \cdot \nabla \theta_+ \theta_+^{s-1} \eta^2 + 2\int_{\bR^n} f\cdot \nabla \eta \theta_+^s \eta. 
\end{align*}
Note that $s \ge 1$ due to the range of $q$. We drop the first term on the right hand side and take absolute value inside the integrals, we have
\begin{align}\label{step1}
s\int_{B_R} |\nabla \theta_+|^2 \theta_+^{s-1} &\le \frac{C}{R}\int_{B_{2R} \setminus B_R}|\nabla \theta_+|\theta_+^s + \frac{C}{R}\int_{B_{2R} \setminus B_R} |u| \theta_+^{s+1}\nonumber \\
&+ C \int_{\{supp(f)\}} |u| \theta_+^s + C\int_{\{supp(f)\}} |\nabla \theta_+|\theta_+^{s-1}+ \frac{C}{R}\int_{B_{2R} \setminus B_R}\theta_+^s \nonumber \\
&=: I + II + III + IV + V,
\end{align}
where $C > 0$ depends only on $n$ and $\|f\|_{L^\infty}$. We denote $o(1)$ to be a quantity that goes to $0$ as $R \to \infty$.

When $R$ is large, for $I$, we have, by H\"older's inequality and \eqref{theta_properties}
\begin{align*}
I &\le \frac{C}{R} \left( \int_{\bR^n} |\nabla \theta_+|^{\frac{n}{n-1}} \right)^{\frac{n-1}{n}}\left( \int_{B_{2R} \setminus B_R} \theta_+^{sn}  \right)^{\frac{1}{n}}\\
 &\le \frac{C'}{R} \left( R^n \left(\frac{\log R}{R^{n-1}}\right)^{sn} \right)^{\frac{1}{n}} = o(1).
\end{align*}
We use \eqref{u_decay} and \eqref{theta_properties} to estimate
\begin{align*}
II \le \frac{C'}{R} R^n R^{-n+3} \left(\frac{\log R}{R^{n-1}}\right)^{s+1} =o(1).
\end{align*}
For $III$, we apply H\"older's inequality and get
\begin{align*}
III \le C \|u\|_{L^{q}(\{supp(f)\})} \|\theta_+^s\|_{L^{\frac{q}{q-1}}(\bR^n)}.
\end{align*}
For $IV$, we use Young's inequality and get
$$IV \le \frac{s}{2} \int_{\{supp(f)\}}  |\nabla \theta_+|^2 \theta_+^{s-1} + C \int_{\{supp(f)\}}\theta_+^{s-1}.$$
For $V$, we have, by \eqref{theta_properties},
$$V \le C' R^{n-1} \left(\frac{\log R}{R^{n-1}}\right)^{s} =o(1).$$
Note that, by \eqref{theta_properties},
$$\left| \theta_+^{\frac{sq}{q-1}} \right| \le \frac{C'[\log (2 + |x|)]^{\frac{sq}{q-1}}}{(1+|x|)^{\frac{sq(n-1)}{q-1}}},$$
and $\frac{sq(n-1)}{q-1} > n$ because of \eqref{s_formula}. Sending $R \to \infty$ in \eqref{step1}, we have
$$\int_{\bR^n} |\nabla \theta_+|^2 \theta_+^{s-1} \le C\|u\|_{L^{q}(\{supp(f)\})} \|\theta_+^s\|_{L^{\frac{q}{q-1}}(\bR^n)} + C \int_{\{supp(f)\}}\theta_+^{s-1} < \infty,$$
where $C > 0$ depends only on $n$, supp$(f)$, and $\|f\|_{L^\infty}$.
Applying Poincare inequality on the left hand side will yield
$$\left( \int_{\bR^n} \theta_+^{(s+1) \frac{n}{n-2}} \right)^{\frac{n-2}{n}} \le C \int_{\bR^n} \left|\nabla \left( \theta_+^{\frac{s+1}{2}} \right)\right|^2 = C\int_{\bR^n} |\nabla \theta_+|^2 \theta_+^{s-1},$$
where we have used \eqref{theta_properties} and \eqref{s_formula} again to justify the validity of the Poincare inequality.
Therefore
\begin{align*}
\left( \int_{\bR^n} \theta_+^{(s+1) \frac{n}{n-2}} \right)^{\frac{n-2}{n}} \le C \|u\|_{L^{q}(\{supp(f)\})} \|\theta_+^s\|_{L^{\frac{q}{q-1}}(\bR^n)} + C \int_{\{supp(f)\}}\theta_+^{s-1}.
\end{align*}
Applying H\"older's inequality and Young's inequality to the last term, we have
\begin{align*}
\int_{\{supp(f)\}}\theta_+^{s-1} \le C \left( \int_{\{supp(f)\}} \theta_+^{(s+1) \frac{n}{n-2}}  \right)^{\frac{(s-1)(n-2)}{(s+1)n}} \le \frac{1}{2}\left( \int_{\bR^n} \theta_+^{(s+1) \frac{n}{n-2}} \right)^{\frac{n-2}{n}} + C.
\end{align*}
Therefore,
\begin{align}\label{theta_1}
\left( \int_{\bR^n} \theta_+^{(s+1) \frac{n}{n-2}} \right)^{\frac{n-2}{n}} \le C \|u\|_{L^{q}(\{supp(f)\})} \|\theta_+^s\|_{L^{\frac{q}{q-1}}(\bR^n)} + C.
\end{align}
By \eqref{s_formula}, we have $(s+1) \frac{n}{n-2} = \frac{sq}{q-1} = \frac{nq}{n-2q}$.
Then estimate \eqref{theta_u_relation_1} follows from plugging \eqref{s_formula} into \eqref{theta_1} and applying Young's inequality.\\
\end{proof}

In view of Proposition \ref{aprioriestimate}, we would like to restrict $q$, such that
$$\frac{nq}{n-2q} > \frac{n}{2},$$
which is equivalent to $q > n/4$. 

The following two lemmas were obtained in \cite{FR4}, see Corollary and Theorem 4.1 on page 137 there. We provide proofs for reader's convenience.

\begin{lemma}\label{p_weighted_lemma}
For $f \in L^\infty(\bR^n)$  with $supp(f) \subset B_{R_0}$, let $u \in C_d^1(\bR^n)$ be a solution of \eqref{integral_equation} and $p$ be given by \eqref{pressure}. Then for any $0 < s < n-4$, $n/4 < q < n/2$, we have
\begin{equation*}
\int_{B_{R_0+2}} \frac{|p|}{|x-y|^{s+2}}  \, dx \le C(q,s,R_0,f) \left( \| \theta_+ \|_{L^{\frac{nq}{n-2q}}(B_{R_0 + 3})} + 1 \right), \quad \forall y \in B_{R_0+1}
\end{equation*}
where $C(q,s,R_0,f) > 0$ depends only on $n$, $q$, $s$, $R_0$, and an upper bound of $\|f\|_{L^\infty}$, and in particular, does not depend on $y$.
\end{lemma}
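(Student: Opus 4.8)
The plan is to use the identity $p=\theta-\tfrac12|u|^2$ to split the integrand. Since $\tfrac12|u|^2\ge 0$ we have the pointwise bound $p_+\le\theta_+$, so the positive part is immediate: by H\"older's inequality, with $m:=\frac{nq}{n-2q}$ and its conjugate $m'$,
\[ \int_{B_{R_0+2}} \frac{p_+}{|x-y|^{s+2}}\,dx \le \int_{B_{R_0+2}} \frac{\theta_+}{|x-y|^{s+2}}\,dx \le \|\theta_+\|_{L^{m}(B_{R_0+3})}\,\Big\| \,|\cdot-y|^{-(s+2)}\Big\|_{L^{m'}(B_{R_0+2})}. \]
The weight lies in $L^{m'}(B_{R_0+2})$ uniformly in $y\in B_{R_0+1}$ precisely because $s<n-4<n-n/q$ (using $n/4<q<n/2$), which makes $(s+2)m'<n$; this is where the hypothesis $s<n-4$ is consumed. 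It then remains to bound $\int_{B_{R_0+2}} p_-\,|x-y|^{-(s+2)}$, and here the pointwise inequality $p_-\le\tfrac12|u|^2+\theta_-$ reduces matters to the two weighted quantities $\int \tfrac12|u|^2\,|x-y|^{-(s+2)}$ and $\int \theta_-\,|x-y|^{-(s+2)}$.

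To control $\int_{B_{R_0+2}}|u|^2\,|x-y|^{-(s+2)}$ I would first establish the weighted Dirichlet estimate $\int_{B_{R_0+3}} |\nabla u|^2\,|x-y|^{-s}\,dx \le C\big(\|\theta_+\|_{L^m(B_{R_0+3})}+1\big)$ for $y\in B_{R_0+1}$, and then pass to $|u|^2$ via the weighted Hardy inequality $\int |u|^2|x-y|^{-(s+2)}\le C\int |\nabla u|^2|x-y|^{-s}$, valid for $s<n-2$ once the decay \eqref{u_decay} is used to discard the contribution at infinity. For the weighted gradient bound I would invoke the Frehse--R\r{u}\v{z}i\v{c}ka weighted estimate underlying \eqref{grad_u_weighted}: covering the fixed ball $B_{R_0+3}$ by unit balls and noting that the hypotheses of Proposition \ref{aprioriestimate} hold with $C_0\sim\|\theta_+\|_{L^m}+C(f)$ (the $W^{1,2}$ and $W^{1,n/(n-1)}$ parts supplied by Lemma \ref{base}), one obtains $\int_{B_R(x_0)}|\nabla u|^2|x-x_0|^{-(n-4)}\le CR^\beta$; since $s<n-4$, the weight $|x-y|^{-s}$ is dominated by $|x-y|^{-(n-4)}$ near $y$ and bounded away from $y$, which controls $\int |\nabla u|^2|x-y|^{-s}$.

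For $\int_{B_{R_0+2}}\theta_-\,|x-y|^{-(s+2)}$ I would test the head--pressure equation \eqref{headpressure} against $\psi:=|x-y|^{-s}\zeta$, with $\zeta$ a cutoff equal to $1$ on $B_{R_0+2}$ and supported in $B_{R_0+3}$, using $-\Delta |x-y|^{-s}=s(n-2-s)|x-y|^{-(s+2)}$ (positive since $0<s<n-2$). Writing $u\cdot\nabla\theta=\mathrm{div}(\theta u)$, integrating by parts, and recalling $\theta=\tfrac12|u|^2+p$, this yields the linear identity
\[ s(n-2-s)\int \frac{\theta\,\zeta}{|x-y|^{s+2}} + \int |\partial_i u_j-\partial_j u_i|^2\,\psi = \int \theta\, u\cdot\nabla\psi + \int (f\cdot u)\psi - \int (\mathrm{div}\,f)\psi + (\text{cutoff terms}). \]
Solving for $\int \theta_-\,|x-y|^{-(s+2)}=\int \theta_+\,|x-y|^{-(s+2)}-\int \theta\,|x-y|^{-(s+2)}$ and using that $\int|\partial_iu_j-\partial_ju_i|^2\psi\le C\int|\nabla u|^2|x-y|^{-s}$ is already controlled above, the remaining pieces are: the data terms, bounded through $\|f\|_{L^\infty}$, $\mathrm{supp}\,f\subset B_{R_0}$, and the $L^{2n/(n-2)}$ bound of Lemma \ref{base}; the cutoff terms, which are supported in the annulus $B_{R_0+3}\setminus B_{R_0+2}$ where $|x-y|^{-s}$ and its derivatives are bounded, hence again controlled by Lemma \ref{base}; and the transport term $\int\theta\,u\cdot\nabla\psi$. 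The decay bounds \eqref{u_decay}, \eqref{p_decay}, \eqref{theta_properties} justify every integration by parts and ensure finiteness of all integrals.

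The main obstacle is twofold. First, the weighted Dirichlet estimate is the heart of the matter: it is exactly the imported FR1 machinery (the $\varepsilon$-regularity/Morrey iteration behind \eqref{grad_u_weighted}), and one must verify both that its hypotheses are genuinely available and that the resulting dependence on $\|\theta_+\|_{L^m}$ is no worse than the linear bound claimed. Second, the transport term, with $|\nabla\psi|\sim|x-y|^{-(s+1)}$, is cubic in $u$ near $y$ and is \emph{not} controlled by the a priori $L^{2n/(n-2)}$ bound alone once $n\ge 7$; I would handle it by Cauchy--Schwarz,
\[ \Big|\int \theta\,u\cdot\nabla\psi\Big| \lesssim \Big(\int \frac{|u|^2}{|x-y|^{s+2}}\Big)^{1/2}\Big(\int \frac{\theta^2}{|x-y|^{s}}\Big)^{1/2}, \]
feeding back the weighted $|u|^2$ bound and controlling $\int\theta^2|x-y|^{-s}$ through the same weighted-energy and Hardy inequalities. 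Closing this requires an absorption argument, since $\int|u|^2|x-y|^{-(s+2)}$ reappears on both sides; making the absorption close with a constant independent of $\|u\|_{C_d^1(\bR^n)}$ and of the correct order in $\|\theta_+\|_{L^m}$ is the delicate point, and is where the favorable sign of the term $-|\partial_iu_j-\partial_ju_i|^2$ in \eqref{headpressure}, together with the restriction $s<n-4$, is ultimately exploited.
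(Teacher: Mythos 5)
Your opening step ($p_+\le\theta_+$ plus H\"older, using $s<n-n/q$) matches the paper's final step, but the core of your argument diverges from the paper's and, as you yourself flag, does not close. The decisive idea you are missing is that one should test the \emph{pressure} equation \eqref{pressure_equation}, $-\Delta p=\partial_iu^j\partial_ju^i-\mathrm{div}\,f$, against $\eta_1|x-y|^{-s}$ rather than the head-pressure equation \eqref{headpressure}. Because $\mathrm{div}\,u=0$, the quadratic term $\partial_iu^j\partial_ju^i$ can be integrated by parts twice onto the weight, producing exactly the weighted quantities $\int |u\cdot(x-y)|^2\eta_1|x-y|^{-s-4}$ and $\int|u|^2\eta_1|x-y|^{-s-2}$ with explicit coefficients (see \eqref{p_weighted_calculation_part_1}); after substituting $p=\theta-\tfrac12|u|^2$, the hypotheses $s(s+2-n)<0$ and $s(s+4-n)<0$ (this is where $0<s<n-4$ is consumed) put all velocity terms \emph{and} $\int|\theta|\eta_1|x-y|^{-s-2}$ on the favorable side, leaving only $\int\theta_+\eta_1|x-y|^{-s-2}$ to be estimated by H\"older against $\|\theta_+\|_{L^{n/2}}$. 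The cutoff terms are handled by Lemma \ref{base} alone. No transport term ever appears, no weighted Dirichlet estimate for $\nabla u$ is needed, and no absorption is required; the bound is manifestly linear in $\|\theta_+\|_{L^{nq/(n-2q)}}$.

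By contrast, both prongs of your plan have genuine gaps. First, you want the weighted Dirichlet estimate $\int|\nabla u|^2|x-y|^{-s}\lesssim\|\theta_+\|_{L^m}+1$ by running the Frehse--R\r{u}\v{z}i\v{c}ka machinery behind \eqref{grad_u_weighted} with $C_0\sim\|\theta_+\|_{L^m}+C(f)$; but the constants there come out of an iteration and are not linear in $C_0$, so you would not recover the linear dependence the lemma asserts --- and a superlinear dependence would break the bootstrap in the proof of Proposition \ref{theta}, which is the whole point of this lemma. (There is also a circularity hazard: Proposition \ref{aprioriestimate} is meant to be applied \emph{after} Proposition \ref{theta} supplies the $\theta_+$ bound.) Second, your treatment of the transport term $\int\theta\,u\cdot\nabla\psi$ via Cauchy--Schwarz introduces $\int\theta^2|x-y|^{-s}$, which contains $|u|^4$ with a singular weight; this is not reachable from Lemma \ref{base} or from a Hardy inequality applied to the weighted Dirichlet energy (Hardy upgrades $|\nabla u|^2$ to $|u|^2$, not to $|u|^4$), and you give no mechanism for closing the absorption. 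Since you explicitly identify both points as unresolved, the proposal is a sketch with essential steps missing rather than a proof.
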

\begin{proof}
Fix $y \in B_{R_0+1}$. We define a smooth cut-off function $\eta_1$ satisfying
$$\eta_1 = 1 ~ \mbox{in }B_{R_0+2}; \quad \eta_1 = 0 ~ \mbox{in }B_{R_0+3}^c; \quad |\nabla \eta_1| + |\nabla^2 \eta_1| \le C.
$$
 We multiply the pressure equation 
\begin{equation}\label{pressure_equation}
- \Delta p = \partial_i u^j \partial_j u^i - \mbox{div }f
\end{equation} 
by $\eta_1|x-y|^{-s}$, where \eqref{pressure_equation} is obtained by taking divergence on \eqref{NS_B1}, and integrate by parts, after some arrangements we have
\begin{equation}\label{p_weighted_calculation_part_1}
\begin{aligned}
s&(s+2)\int_{\bR^n} \frac{|u \cdot (x-y)|^2\eta_1}{|x-y|^{s+4}}  - s\int_{\bR^n} \frac{|u |^2\eta_1}{|x-y|^{s+2}}  + s(s+2-n)\int_{\bR^n} \frac{p\eta_1}{|x-y|^{s+2}} \\
&= - \int_{\bR^n} p \Delta \eta_1 |x-y|^{-s} - 2\int_{\bR^n} p \nabla\eta_1\cdot \nabla(|x-y|^{-s}) - \int_{\bR^n} f \cdot \nabla(\eta_1 |x-y|^{-s})\\
&- 2\int_{\bR^n} u^ju^i \partial_i(|x-y|^{-s})\partial_j\eta_1 - \int_{\bR^n} u^ju^i |x-y|^{-s}\partial_{ij} \eta_1 =: RHS
\end{aligned}
\end{equation}
Since $y \in B_{R_0+1}$, $\eta_1 \equiv 0$ in $B_{R_0+3}^c$, and $supp(\nabla \eta_1) \subset B_{R_0+3} \setminus B_{R_0+2},$ we have
\begin{align*}
|RHS| &\le C \left( \int_{\{R_0+2 \le |x| \le R_0+3 \}} |p| \, dx + \int_{\{R_0+2 \le |x| \le R_0+3 \}} |u|^2\, dx + \|f\|_{L^{\infty}(\bR^n)} \right)\\
&\le C(s,f) \left( \|p\|_{L^{\frac{n}{n-2}}(\bR^n)} + \|u\|_{L^{\frac{2n}{n-2}}(\bR^n)} +\|f\|_{L^{\infty}(\bR^n)}  \right) \le C(s,f),
\end{align*}
where we have used Lemma \ref{base} and H\"older's inequality. The left hand side of \eqref{p_weighted_calculation_part_1} can be written as
$$s(s+2)\int_{\bR^n} \frac{|u \cdot (x-y)|^2\eta_1 }{|x-y|^{s+4}} + s(s+2-n) \int_{\bR^n} \frac{\theta \eta_1 }{|x-y|^{s+2}} - \frac{s(s+4-n)}{2} \int_{\bR^n} \frac{|u|^2\eta_1 }{|x-y|^{s+2}}.$$
Since $s(s+2-n) < 0$ and $\frac{s(s+4-n)}{2} < 0,$
there exists a positive constant $C(s)$ depending only on $s$ and $n$, such that
\begin{equation}\label{p_weighted_calculation_part_2}
\frac{1}{C(s)} \left( \int_{\bR^n} \frac{|u \cdot (x-y)|^2\eta_1 }{|x-y|^{s+4}} + \int_{\bR^n} \frac{|u|^2\eta_1 }{|x-y|^{s+2}} \right) - \int_{\bR^n} \frac{\theta \eta_1 }{|x-y|^{s+2}} \le C(s,f).
\end{equation}
Replacing $\theta$ by $2\theta_+ - |\theta|$ in \eqref{p_weighted_calculation_part_2}, we have
\begin{align*}
&\int_{\bR^n} \frac{|u \cdot (x-y)|^2\eta_1 }{|x-y|^{s+4}} + \int_{\bR^n} \frac{|u|^2\eta_1 }{|x-y|^{s+2}} + \int_{\bR^n} \frac{|\theta| \eta_1 }{|x-y|^{s+2}}\\
 \le &C(s) \int_{\bR^n} \frac{\theta_+ \eta_1 }{|x-y|^{s+2}} + C(s,f)\\
 \le &C(s) \| \theta_+ \|_{L^{\frac{n}{2}}(B_{R_0 + 3})} \left( \int_{B_{R_0+3}} |x - y|^{-\frac{(s+2)n}{n-2}} \right)^{\frac{n-2}{n}} + C(s,f)\\
 \le &C(q,s,f) \left( \| \theta_+ \|_{L^{\frac{nq}{n-2q}}(B_{R_0 + 3})} + 1 \right),
\end{align*}
where we have used H\"older's inequality, the facts $\frac{(s+2)n}{n-2} < n$ and $\frac{nq}{n-2q} > \frac{n}{2}$. Since $p = \theta - \frac{|u|^2}{2}$, we have
\begin{align*}
\int_{\bR^n} \frac{|p|\eta_1}{|x-y|^{s+2}} &\le \int_{\bR^n} \frac{|\theta| \eta_1 }{|x-y|^{s+2}} + \frac{1}{2} \int_{\bR^n} \frac{|u|^2\eta_1 }{|x-y|^{s+2}}\\
&\le C(q,s,f) \left( \| \theta_+ \|_{L^{\frac{nq}{n-2q}}(B_{R_0 + 3})} + 1 \right).
\end{align*}
\end{proof}

\begin{lemma}\label{theta_u_relation_2_lemma}
For $f \in L^\infty(\bR^n)$ with $supp(f) \subset B_{R_0}$, let $u \in C_d^1(\bR^n)$ be a solution of \eqref{integral_equation} and $p$ be given by \eqref{pressure}. Then for any $\max \{2,n/4\} < q < \min\{4, n/2\}$, we have
\begin{equation}\label{theta_u_relation_2}
\|u\|_{L^q(B_{R_0})} \le C(q, R_0, f) \left(  \| \theta_+ \|^{\frac{q-2}{2q}}_{L^{\frac{nq}{n-2q}}(B_{R_0+3})} +  \| \theta_+ \|^{\frac{1}{2}}_{L^{\frac{nq}{n-2q}}(B_{R_0})} + 1 \right),
\end{equation}
where $C(q, R_0, f) > 0$ depends only on $n$, $q$, $R_0$, and an upper bound of $\|f\|_{L^\infty}$.
\end{lemma}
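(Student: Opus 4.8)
The plan is to reduce the estimate to a local $L^{q/2}$ bound on the pressure and then to extract that bound from the weighted estimate of Lemma \ref{p_weighted_lemma}. First I would use the defining identity $|u|^2 = 2(\theta - p)$. Since $\theta \le \theta_+$ and $-p \le p_-:=\max\{-p,0\}$ pointwise, this gives $|u|^2 \le 2\theta_+ + 2p_-$, hence $|u|^q \le C(\theta_+^{q/2} + p_-^{q/2})$ with $C=C(q)$. Integrating over $B_{R_0}$,
\[
\int_{B_{R_0}} |u|^q \le C\int_{B_{R_0}} \theta_+^{q/2} + C\int_{B_{R_0}} p_-^{q/2}.
\]
For the first term, since $q<n/2$ we have $q/2 < \frac{nq}{n-2q}$, so H\"older's inequality on the bounded domain $B_{R_0}$ yields $\int_{B_{R_0}}\theta_+^{q/2} \le C(R_0)\,\|\theta_+\|_{L^{nq/(n-2q)}(B_{R_0})}^{q/2}$, which after taking the $q$-th root produces exactly the term $\|\theta_+\|_{L^{nq/(n-2q)}(B_{R_0})}^{1/2}$ in \eqref{theta_u_relation_2}. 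Thus everything reduces to showing $\int_{B_{R_0}} p_-^{q/2} \le \|p\|_{L^{q/2}(B_{R_0})}^{q/2} \le C\big(\|\theta_+\|_{L^{nq/(n-2q)}(B_{R_0+3})}^{(q-2)/2}+1\big)$.

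For this crux step I would choose the exponent in Lemma \ref{p_weighted_lemma} to be $s = n-2-\frac{4}{q-2}$. A direct check shows $s<n-4 \iff q<4$ and $s>0 \iff q>\frac{2n}{n-2}$, so $s\in(0,n-4)$ is admissible precisely in the relevant subrange of $\max\{2,n/4\}<q<\min\{4,n/2\}$; this is where the upper restriction $q<4$ enters. (The condition $q>n/4$ is equivalent to $\frac{nq}{n-2q}>\frac n2$, which is what makes the right-hand norm supercritical, as needed later.) With this $s$, Lemma \ref{p_weighted_lemma} gives $\sup_{y\in B_{R_0+1}}\int_{B_{R_0+2}}\frac{|p(x)|}{|x-y|^{s+2}}\,dx \le M$, with $M=C(\|\theta_+\|_{L^{nq/(n-2q)}(B_{R_0+3})}+1)$; since $n-s-2 = \frac{4}{q-2}$, this is a bound on the Riesz potential $I_{4/(q-2)}(|p|\mathbf 1_{B_{R_0+2}})$, i.e. the borderline control corresponding to $p\in L^{n(q-2)/4}$. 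Interpolating this against the fixed bound $\|p\|_{L^{n/(n-2)}(\bR^n)}\le C(f)$ from Lemma \ref{base}, with interpolation weight $\lambda=\frac{q-2}{q}$ (the arithmetic $\frac2q=\lambda\frac{4}{n(q-2)}+(1-\lambda)\frac{n-2}{n}$ is exactly what selects this $s$ and $\lambda$), would give $\|p\|_{L^{q/2}(B_{R_0})}\le CM^{(q-2)/q}$, hence $\int_{B_{R_0}}p_-^{q/2}\le CM^{(q-2)/2}\le C(\|\theta_+\|_{L^{nq/(n-2q)}(B_{R_0+3})}^{(q-2)/2}+1)$. Collecting the two contributions and taking the $q$-th root gives \eqref{theta_u_relation_2}.

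The main obstacle is the interpolation in the crux step: Lemma \ref{p_weighted_lemma} only yields an $L^\infty$ bound on a Riesz potential of $|p|$, which is strictly weaker than an $L^{n(q-2)/4}$ bound, so the passage to the genuine Lebesgue norm $\|p\|_{L^{q/2}(B_{R_0})}$ is not a formal $L^p$-interpolation and must be justified by a real-variable argument (for instance through Lorentz spaces or a distribution-function / Hedberg-type estimate), crucially exploiting that $q/2 < n(q-2)/4$ \emph{strictly}, i.e. that $L^{q/2}$ is subcritical relative to the potential. I would also note that when $q\le \frac{2n}{n-2}$ (which occurs only for small $n$, where $s\le 0$) the term $\int_{B_{R_0}}p_-^{q/2}$ is already controlled by the fixed bound $\|p\|_{L^{n/(n-2)}}$ alone and merely contributes to the constant, so in that regime Lemma \ref{p_weighted_lemma} is not needed and the argument simplifies.
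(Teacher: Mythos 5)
Your reduction to a bound on $\int_{B_{R_0}}|p|^{q/2}$ via $|u|^2\le 2\theta_+ + 2p_-$, and the treatment of the $\theta_+$ term by H\"older, coincide with the paper's final step, and you have correctly identified Lemma \ref{p_weighted_lemma} and the threshold $q<4$ as the key inputs. The gap is in your crux step, and it is not merely a missing justification: the interpolation inequality you need is false as a statement about general functions satisfying your two hypotheses. The conclusion of Lemma \ref{p_weighted_lemma} with $n-s-2=\frac{4}{q-2}$ only says that the Riesz potential $I_{4/(q-2)}(|p|\chi_{B_{R_0+2}})$ is bounded by $M$ on $B_{R_0+1}$, i.e.\ that $|p|$ lies locally in the Morrey space $L^{1,\,n-4/(q-2)}$; together with $\|p\|_{L^{n/(n-2)}}\le C$ this does not control $\|p\|_{L^{q/2}(B_{R_0})}$ by $CM^{(q-2)/q}$. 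Counterexample: let $g=h\sum_k\chi_{B_r(x_k)}$ with $N$ disjoint balls spread uniformly in $B_1$ and total volume $V=Nr^n$. Then $\sup_y\int g(x)|x-y|^{-(n-\alpha)}\,dx\approx h\left(V+r^{\alpha}\right)$ with $\alpha=\frac{4}{q-2}$, while $\|g\|_{L^{q/2}}=hV^{2/q}$ and $\|g\|_{L^{n/(n-2)}}=hV^{(n-2)/n}$; taking $n=8$, $q=3$, $r=V=\epsilon$, $h=\epsilon^{-3/4}$ gives $\|g\|_{L^{n/(n-2)}}\approx 1$ and $M\approx\epsilon^{1/4}$, so $M^{(q-2)/q}\to 0$, yet $\|g\|_{L^{q/2}}=\epsilon^{-1/12}\to\infty$. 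So no Lorentz-space or distribution-function argument can close this step; the restricted-type inequality you verified on a single characteristic function does not upgrade.

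The missing idea is that the paper uses the pressure equation a second time instead of interpolating. It sets $\varphi=(-\Delta)^{-1}\bigl(|p|^{(q-2)/2}\,sgn(p)\,\eta_2\bigr)$, proves $\|\varphi\|_{L^\infty(B_{R_0+1})}\le C\bigl(\|\theta_+\|^{(q-2)/2}_{L^{nq/(n-2q)}(B_{R_0+3})}+1\bigr)$ by splitting the Newtonian kernel as $\bigl(|x-y|^{-(s+2)}\bigr)^{(q-2)/2}$ times an integrable factor (H\"older plus condition \eqref{s_requirement}, which incidentally forces $s$ strictly larger than your borderline choice $n-2-\frac{4}{q-2}$ and is exactly where $q<4$ enters), and then tests \eqref{pressure_equation} against $\varphi\eta_2$: the left-hand side produces $\int|p|^{q/2}\eta_2^2$ on the nose, while the right-hand side is $\|\varphi\|_{L^\infty}$ times quantities already bounded by Lemma \ref{base}. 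This duality step is what converts the weighted bound of Lemma \ref{p_weighted_lemma} into the $L^{q/2}$ bound on $p$; it genuinely uses that $p$ solves $-\Delta p=\partial_i u^j\partial_j u^i-\mbox{div }f$ and cannot be replaced by a real-variable argument from the two a priori bounds alone.
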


\begin{remark}
In the lemma above, in order for the interval $(\max \{2,n/4\}, \min\{4, n/2\})$ to be nonempty, we require $n/4 < 4$, which is equivalent to $n < 16$. This is the only place where the restriction of dimensions $n \le 15$ enters.
\end{remark}

\begin{proof}
We define a smooth cut-off function $\eta_2$ satisfying
$$\eta_2 = 1 ~ \mbox{in }B_{R_0}; \quad \eta_2 = 0 ~ \mbox{in }B_{R_0+1}^c; \quad |\nabla \eta_2| + |\nabla^2 \eta_2| \le C.
$$
For any $\max \{2,n/4\} < q < \min\{4, n/2\}$, we define
$$\varphi(y) := c_n \int_{\bR^n} \frac{1}{|x - y|^{n-2}} |p(x)|^{\frac{q-2}{2}} sgn(p(x)) \eta_2(x) \, dx, \quad y \in \bR^n,$$
where $c_n$ is the constant related to the fundamental solution of Laplace operator, such that
\begin{equation}\label{varphi_def}
-\Delta \varphi = |p|^{\frac{q-2}{2}} sgn(p) \eta_2,
\end{equation}
and $sgn$ is the sign function such that 
$$sgn(p) = \left\{
\begin{aligned}
1, \quad &\mbox{when }p>0;\\
0, \quad &\mbox{when }p=0;\\
-1, \quad &\mbox{when }p<0.
\end{aligned}
\right.$$
Fix an $s \in (0,n-4)$ such that
\begin{equation}\label{s_requirement}
\frac{2}{4-q} \cdot \left[ -n+2+ \frac{(s+2)(q-2)}{2} \right] > -n.
\end{equation}
By H\"older's inequality, we have
\begin{align*}
|\varphi(y)| &\le c_n \int_{\bR^n} \left( \frac{|p|\eta_2}{|x-y|^{s+2}} \right)^{\frac{q-2}{2}} |x-y|^{-n+2+ \frac{(s+2)(q-2)}{2}} \eta_2^{\frac{4-q}{2}} \, dx\\
&\le c_n \left( \int_{\bR^n} \frac{|p|\eta_2}{|x-y|^{s+2}} \right)^{\frac{q-2}{2}} \left( \int_{\bR^n} |x-y|^{\frac{2}{4-q} \cdot \left[ -n+2+ \frac{(s+2)(q-2)}{2} \right]  } \eta_2 \right)^{\frac{4-q}{2}}\\
&\le C\left( \int_{\bR^n} \frac{|p|\eta_2}{|x-y|^{s+2}} \right)^{\frac{q-2}{2}}
\end{align*}
due to \eqref{s_requirement}. Then by Lemma \ref{p_weighted_lemma}, we have
\begin{equation}\label{varphi_L_infty}
\|\varphi\|_{L^\infty(B_{R_0+1})} \le C(q,f) \left(  \| \theta_+ \|^{\frac{q-2}{2}}_{L^{\frac{nq}{n-2q}}(B_{R_0+3})} +  1 \right).
\end{equation}
We multiply \eqref{pressure_equation} by $\varphi \eta_2$ and integrate by parts, we have
\begin{align*}
- \int_{\bR^n} p \Delta \varphi \eta_2 + 2 \int_{\bR^n} \varphi \nabla p \cdot \nabla \eta_2 + \int_{\bR^n} p \varphi \Delta \eta_2  = \int_{\bR^n} \left( \partial_i u^j \partial_j u^i - \mbox{div }f \right) \varphi \eta_2,
\end{align*}
which implies, by \eqref{varphi_def}, H\"older's inequality, Lemma \ref{base} and \eqref{varphi_L_infty},
\begin{align}\label{p_q/2_power}
&\int_{\bR^n} |p|^{\frac{q}{2}} \eta_2^2 =  \int_{\bR^n} \left( \partial_i u^j \partial_j u^i - \mbox{div }f \right) \varphi \eta_2 - 2 \int_{\bR^n} \varphi \nabla p \cdot \nabla \eta_2 -  \int_{\bR^n} p \varphi \Delta \eta_2\nonumber \\
&\le C(f) \|\varphi\|_{L^\infty(B_{R_0+1})} \left( \| \nabla u\|_{L^2(\bR^n)}^2 + \| \nabla f\|_{L^\infty(\bR^n)} + \|\nabla p\|_{L^{\frac{n}{n-1}}(\bR^n)} + \| p\|_{L^{\frac{n}{n-2}}(\bR^n)} \right)\nonumber \\
&\le C(f) \|\varphi\|_{L^\infty(B_{R_0+1})} \le C(q,f) \left(  \| \theta_+ \|^{\frac{q-2}{2}}_{L^{\frac{nq}{n-2q}}(B_{R_0+3})} +  1 \right).
\end{align}
Since $|u|^2 \le 2|p| + 2\theta_+$, by \eqref{p_q/2_power} and H\"older's inequality, we have
\begin{align*}
\int_{B_{R_0}} |u|^q &\le C(q) \left( \int_{B_{R_0}} |p|^{\frac{q}{2}} + \int_{B_{R_0}} \theta_+^{\frac{q}{2}} \right)\\
&\le  C(q,f) \left(  \| \theta_+ \|^{\frac{q-2}{2}}_{L^{\frac{nq}{n-2q}}(B_{R_0+3})} +  1 + \| \theta_+ \|^{\frac{q}{2}}_{L^{\frac{nq}{n-2q}}(B_{R_0})}\right),
\end{align*}
which implies \eqref{theta_u_relation_2}.\\
\end{proof}

\begin{proof}[Proof of Proposition \ref{theta}]
We fix a $q$ satisfying $\max \{2,n/4\} < q < \min\{4, n/2\}$, then by Lemma \ref{theta_u_relation_1_lemma} and Lemma \ref{theta_u_relation_2_lemma}, we have
\begin{align*}
\| \theta_+ \|_{L^{\frac{nq}{n-2q}}(\bR^n)} &\le C(q,f) \left( \|u\|_{L^q(\{supp(f)\})} + 1 \right) \\
&\le C(q,f) \left(  \| \theta_+ \|^{\frac{q-2}{2q}}_{L^{\frac{nq}{n-2q}}(\bR^n)} +  \| \theta_+ \|^{\frac{1}{2}}_{L^{\frac{nq}{n-2q}}(\bR^n)} + 1 \right).
\end{align*}
This implies, using $0 < \frac{q-2}{2q} < 1$,
$$\| \theta_+ \|_{L^{\frac{nq}{n-2q}}(\bR^n)} \le C(q,f),$$
where $C(q,f) > 0$ depends only on $n,q,$ supp$(f)$, and $\|f\|_{L^\infty}$. Recall that $q > n/4$ is equivalent to $\frac{nq}{n-2q} > \frac{n}{2}$, Proposition \ref{theta} is proved with $r = \frac{nq}{n-2q}$.\\
\end{proof}

\section{Proof of Theorem \ref{decay_u_prop}}

This section is devoted to the proof of Theorem \ref{decay_u_prop}. First, we quantify the decay of $\nabla u$ in $L^2$ norm as follow.\\

\begin{lemma}\label{decay}
For $f \in L^\infty(\bR^n)$ with compact support, let $u \in C_d^1(\bR^n)$ be a solution of \eqref{integral_equation}. Then for any $\varepsilon > 0$, there exists an $R$ depending only on $\varepsilon$, $n$, an upper bound of the diameter of supp$(f)$, and an upper bound of $\|f\|_{L^\infty(\bR^n)}$, such that
$$\int_{\bR^n \setminus B_R} |\nabla u|^2 < \varepsilon.$$
\end{lemma}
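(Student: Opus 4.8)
The plan is to run a localized energy (Caccioppoli-type) estimate on the exterior of a large ball, exactly as in the proof of Lemma \ref{base}, but with the cut-off placed near infinity rather than truncating a fixed region. Fix $R_0$ with $\mathrm{supp}(f)\subset B_{R_0}$, and for $R>R_0$ choose a smooth $\eta$ with $\eta\equiv 0$ on $B_R$, $\eta\equiv 1$ outside $B_{2R}$, and $|\nabla\eta|\le C/R$ supported in the annulus $A_R:=B_{2R}\setminus B_R$. Testing \eqref{NS} against $u\eta^2$ and integrating by parts yields the identity
\[
\int|\nabla(u\eta)|^2=\int|u|^2|\nabla\eta|^2+\int|u|^2\,u\cdot\nabla\eta\,\eta+2\int p\,u\cdot\nabla\eta\,\eta,
\]
where the force term drops out since $f\eta\equiv 0$ once $R>R_0$. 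Because $\eta\equiv 1$ outside $B_{2R}$, the left-hand side dominates $\int_{\bR^n\setminus B_{2R}}|\nabla u|^2$, so the task reduces to controlling the three annular integrals $\frac{1}{R^2}\int_{A_R}|u|^2$, $\frac{1}{R}\int_{A_R}|u|^3$, and $\frac{1}{R}\int_{A_R}|p|\,|u|$.

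Next I would bound these using only the uniform a priori information already in hand: the global bounds $\|\nabla u\|_{L^2}+\|u\|_{L^{2n/(n-2)}}\le C\|f\|$ and $\|p\|_{L^{n/(n-2)}}\le C(\|f\|^2+\|f\|)$ from Lemma \ref{base}, the uniform $\|u\|_{L^\infty}+\|\nabla u\|_{L^\infty}\le C(f)$ from Corollary \ref{u_C1_coro}, and the pressure decay \eqref{p_decay}. Applying H\"older's inequality on $A_R$ against these global norms expresses each annular integral as a tail of a globally finite quantity; for instance $\frac{1}{R^2}\int_{A_R}|u|^2\le C\,\|u\|_{L^{2n/(n-2)}(A_R)}^2$ after absorbing the measure factor $|A_R|^{2/n}\le CR^2$. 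Since the global norms are finite, each such tail tends to $0$, which already gives the qualitative statement $\int_{\bR^n\setminus B_R}|\nabla u|^2\to 0$.

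The main obstacle is the \emph{uniformity} of $R$: the tail argument above only produces $R=R(u,\varepsilon)$, whereas the lemma demands $R$ depending on the data alone. To upgrade to a uniform rate I would feed in the uniform local regularity through rescaling. For $|x_0|=\rho$ large, the field $w(z):=\tfrac{\rho}{2}\,u\big(x_0+\tfrac{\rho}{2}z\big)$ solves the force-free stationary system on $B_1$, and by Lemma \ref{base} and Proposition \ref{theta} together with the scaling of each norm it satisfies the hypotheses of Proposition \ref{aprioriestimate} (equivalently Theorem \ref{regularity_criterion}) with constants depending only on the data; this converts into a uniform decay of $u$ and $\nabla u$ at infinity and hence an explicit $C(f)R^{-\delta}$ bound for the three annular integrals. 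The delicate point is that a purely scaling-critical estimate yields only the borderline exponent (insufficient for $\int|\nabla u|^2$ to converge when $n\ge 5$), so the argument must exploit the supercritical gain $\theta_+\in L^r$ with $r>n/2$ from Proposition \ref{theta} — equivalently the Morrey bound \eqref{grad_u_weighted} with its strictly positive exponent $\beta$ — to push past criticality and thereby obtain a quantitative, data-only tail decay. Establishing this quantitative decay, rather than the qualitative limit, is where the real work lies.
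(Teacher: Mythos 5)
Your starting point --- testing the equation against $u$ times a cut-off supported near infinity so that the force term drops out --- is the same as the paper's, and you correctly identify that the whole difficulty is making $R$ depend only on the data rather than on the particular solution $u$. But your proposed resolution of that difficulty does not work, and the idea the paper actually uses is absent. The rescaling-plus-$\varepsilon$-regularity route is circular here: to apply Theorem \ref{regularity_criterion} to $w(z)=\frac{\rho}{2}\,u(x_0+\frac{\rho}{2}z)$ at large $\rho$ you must verify $r^{-(n-4)}\int_{B_r(x_1)}|\partial_i u_j-\partial_j u_i|^2<\varepsilon_0$ at \emph{all} scales up to $O(\rho)$, and for the intermediate and large scales this is exactly the tail smallness of $\int|\nabla u|^2$ that Lemma \ref{decay} is supposed to furnish --- indeed the paper proves Theorem \ref{decay_u_prop} in precisely this way, \emph{using} Lemma \ref{decay} as input. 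Nor can you instead verify the hypotheses of Proposition \ref{aprioriestimate} for $w$ from Proposition \ref{theta}: the norm $\|\theta_+\|_{L^r}$ with $r>n/2$ is supercritical, so $\|\theta_{w,+}\|_{L^r(B_1)}^r=(\rho/2)^{2r-n}\|\theta_+\|^r_{L^r(B_{\rho/2}(x_0))}$ carries a positive power of $\rho$ and is not uniformly bounded by the global estimate. And as you yourself note, even granting the conclusion $|\nabla u(x)|\le C|x|^{-2}$, this critical rate is not enough to make $\int_{|x|>R}|\nabla u|^2$ small for $n\ge5$, so the ``real work'' you defer is the entire content of the lemma.

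The paper's mechanism is more elementary and entirely different from what you sketch: a pigeonhole selection of a good unit-width annulus. Writing $E_i=B_{i+1}\setminus B_i$ and using that $\sum_{i=l}^{l+m}\int_{E_i}\bigl(|\nabla u|^2+|u|^{2n/(n-2)}+|p|^{n/(n-2)}\bigr)$ is bounded by the global norms of Lemma \ref{base} (hence by the data), while $\sum_{i=l}^{l+m}i^{-1}\ge c\log(m/l)$, one finds some $i\in[l,l+m]$ with $\int_{E_i}(\cdots)\le C/(i\log(m/l))$. Running the Caccioppoli estimate with a cut-off transitioning across that particular annulus, and estimating the three boundary terms by H\"older on $E_i$ (using $|E_i|\le Ci^{n-1}$ and the uniform $\|u\|_{L^\infty}$ bound of Corollary \ref{u_C1_coro}), gives $\int_{\bR^n\setminus B_{i+1}}|\nabla u|^2\le C(\log(m/l))^{-9/10}$ with $C$ depending only on the data; taking $m=l^2$ and $l$ large finishes. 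This averaging trick is the missing idea; without it (or a substitute) your argument only yields the qualitative statement with $R$ depending on $u$.
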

\begin{proof}
In the following, $C$ denotes some positive constants depending
only on $n, R_0$ and an upper bound of $\|f\|_{L^\infty}$ whose values may change from line to line,  where $R_0 >0$ satisfies $supp(f) \subset B_{R_0}$.

Let $u \in C_d^1(\bR^n)$ be a solution of \eqref{integral_equation} and $p$ be given by \eqref{pressure}.  For all $i > R_0$, we take $\eta_i$ to be a smooth cut-off function such that 
$$\eta_i = 0 ~ \mbox{in }B_{i}; \quad \eta_i = 1 ~ \mbox{in }B_{i+1}^c; \quad |\nabla \eta_i|  \le C ~\mbox{in }E_i:= B_{i+1}\setminus B_i.
$$
Multiplying \eqref{NS} by $u \eta_i$ and integrating by parts, since $u$ and $p$ have the decay \eqref{u_decay} and \eqref{p_decay}, we have
\begin{equation}\label{grad_u_outside_ball}
\int_{\bR^n\setminus B_{i+1} }
|\nabla u|^2
\le C \int_{  E_i }
\left( |u||\nabla u|+ |u|^3 +|p||u|\right).
\end{equation}
For $m \ge 2l$, we have
\begin{align*}
\int_{\bR^n}  \left( |\nabla u|^2 +|u|^{ \frac {2n}{n-2} }
+|p|^{ \frac n{n-2} }\right)
&\ge  \sum_{ i=l} ^{l+m} \int_{ E_i }
 \left( |\nabla u|^2 +|u|^{ \frac {2n}{n-2} }
+|p|^{ \frac n{n-2} }\right)\\
&\ge  
\min_{ l\le i\le l+m} i  \int_{ E_i }
 \left( |\nabla u|^2 +|u|^{ \frac {2n}{n-2} }
+|p|^{ \frac n{n-2} }\right) \sum_{ i=l} ^{l+m} \frac 1i\\
&\ge  \frac 1C \left(\log \left(\frac ml\right)\right) \min_{ l\le i\le l+m} i  \int_{ E_i }
 \left( |\nabla u|^2 +|u|^{ \frac {2n}{n-2} }
+|p|^{ \frac n{n-2} }\right). 
\end{align*}
It follows that, by Lemma \ref{base}, for some $i\in \{l, l+1, \cdots, l+m\}$,
\begin{equation}
\int_{ E_i }
\left( |\nabla u|^2 +|u|^{ \frac {2n}{n-2} }
+|p|^{ \frac n{n-2} }\right)
\le \frac {C}{ i \log (\frac ml) }.
\label{1}
\end{equation}
By H\"older's inequality and \eqref{1}, we can estimate
\begin{align}\label{grad_u_part_1}
\int_{ E_i}|u| |\nabla u|
&\le \|u\|_{ L^{ \frac {2n}{n-2} } (E_i)  }
\|\nabla u\|_{ L^2  (E_i)  }
|E_i|^{ \frac 1n}\nonumber\\
&\le C i ^{ \frac {n-1}n } \|u\|_{ L^{ \frac {2n}{n-2} } (E_i)  }
\|\nabla u\|_{ L^2  (E_i)  }\le C \left(  \log \left(\frac ml \right)  \right)^{ \frac {1-n}n }.
\end{align}
We recall that by Corollary \ref{u_C1_coro}, we have
\begin{equation}\label{u_L_infty}
\| u \|_{L^\infty(\bR^n)} \le C.
\end{equation}
When $n \ge 6$, by  \eqref{u_L_infty}, \eqref{1}, and H\"older's inequality, we have
\begin{align}
\int_{ E_i}|u|^3
&\le \|u\|_{ L^\infty(E_i)} ^{ \frac {n-6}{n-2} } 
\|u\|_{ L^{ \frac {2n}{n-2} } (E_i)  }^{  \frac {2n}{n-2} }
\le C \left(  \log \left(\frac ml\right)  \right)^{ - 1 },\label{grad_u_part_2} \\
\int_{ E_i}
|p||u|&\le \|p\|_{ L^{ \frac n{n-2}} (E_i)  }
\|u\| _{ L^{ \frac n2} (E_i)  }
 \le 
\|u\|_{ L^\infty(E_i) } ^{ \frac {n-6}{n-2} }
\|p\|_{ L^{ \frac n{n-2}} (E_i)  } 
\|u\| _{ L^{ \frac {2n}{n-2} } (E_i)  } ^{ \frac {4}{n-2} } \nonumber\\ 
&\le C 
\|p\|_{ L^{ \frac n{n-2}} (E_i)  }
\|u\| _{ L^{ \frac {2n}{n-2} } (E_i)  } ^{ \frac {4}{n-2} }\le  C \left(  \log \left(\frac ml\right)  \right)^{ - 1 }.\label{grad_u_part_3}
\end{align}
When $n = 5$, by H\"older's inequality and \eqref{1},
\begin{align}
&\int_{E_i} |u|^3 \le |E_i|^{\frac{6-n}{2n}}\|u\|_{ L^{ \frac {2n}{n-2} } (E_i)  }^3 \le Ci^{\frac{(6-n)(n-1)}{2n}}\|u\|_{ L^{ \frac {2n}{n-2} } (E_i)  }^3 \le C\left(  \log \left(\frac ml\right)  \right)^{ - \frac{9}{10} },\label{grad_u_part_4} \\
&\int_{E_i} |p||u| \le |E_i|^{\frac{6-n}{2n}}\|p\|_{ L^{ \frac n{n-2}} (E_i)  } \|u\| _{ L^{ \frac {2n}{n-2} } (E_i)  }\le C\left(  \log \left(\frac ml\right)  \right)^{ - \frac{9}{10} }.\label{grad_u_part_5}
\end{align}
By \eqref{grad_u_outside_ball}, \eqref{grad_u_part_1}, \eqref{grad_u_part_2}, \eqref{grad_u_part_3}, \eqref{grad_u_part_4} and \eqref{grad_u_part_5}, we have
\begin{equation*}
\int_{\bR^n \setminus B_{i+1}} |\nabla u|^2 \le \int_{E_i}|u| |\nabla u| + |u|^3 + |p||u|\,dS \le C \left(  \log \left(\frac ml\right)  \right)^{ - \frac{9}{10} }.
\end{equation*}
Taking $m = l^2$, Lemma \ref{decay} is proved.
\end{proof}

Now we are ready to prove Theorem \ref{decay_u_prop}, with the help of Theorem \ref{regularity_criterion}, Corollary \ref{u_C1_coro}, and Lemma \ref{decay}.\\

\begin{proof}[Proof of Theorem \ref{decay_u_prop}]
Let $u \in C_d^1(\bR^n)$ be a solution of \eqref{integral_equation} and $p$ be given by \eqref{pressure}. By H\"older's inequality and \eqref{u_base}, for any $x_0 \in \bR^n$ and $R > 0$, we have
\begin{align}\label{v_L2}
R^{-(n-2)}\int_{B_R(x_0)} |u|^2 &\le R^{-(n-2)} \left( \int_{B_R(x_0)} |u|^{\frac{2n}{n-2}} \right)^{\frac{n-2}{n}} \left( \int_{B_R(x_0)} 1 \right)^{\frac{2}{n}}\nonumber\\
&\le CR^{-(n-4)} \left( \int_{B_R(x_0)} |u|^{\frac{2n}{n-2}} \right)^{\frac{n-2}{n}} \le C_1R^{-(n-4)},
\end{align}
where $C_1 > 0$ depends only on $n$, supp$(f)$, and $\|f\|_{L^\infty}$.
We choose $\varepsilon_0$ as in Theorem \ref{regularity_criterion} with $M_0 = C_1$.
Because of \eqref{u_C1}, for any $x_1 \in \bR^n$, we have
$$r^{-(n-4)}\int_{B_r(x_1)} |\partial_i u_j - \partial_j u_i|^2 \le C_2r^4,$$
where $C_2 >0$ depends only on $n$, supp$(f)$, and $\|f\|_{L^\infty}$. Therefore, one can choose $r_1$ such that $C_2r_1^4 < \varepsilon_0$, and hence
\begin{equation}\label{condition_part1}
r^{-(n-4)}\int_{B_r(x_1)} |\partial_i u_j - \partial_j u_i|^2 < \varepsilon_0, \quad \mbox{for all } 0 < r < r_1,\quad x_1 \in \bR^n.
\end{equation}
By Lemma \ref{decay}, there exists an $R_0$ depending on $\varepsilon_0$, $r_1$, $n$, supp$(f)$, and $\|f\|_{L^\infty}$, such that
$$\int_{\bR^n \setminus B_{R_0}} |\partial_i u_j - \partial_j u_i|^2 \le C \int_{\bR^n \setminus B_{R_0}} |\nabla u|^2 < \varepsilon_0 r_1^{n-4}.$$
For any $R > R_0$, $|x_0| = 3R$ and $x_1 \in B_R(x_0)$, we have
\begin{equation}\label{condition_part2}
r^{-(n-4)}\int_{B_r(x_1)} |\partial_i u_j - \partial_j u_i|^2 \le r^{-(n-4)} \int_{\bR^n \setminus B_{R_0}} |\partial_i u_j - \partial_j u_i|^2 < \varepsilon_0, ~\forall r_1 \le r < R/2. 
\end{equation}
Combining \eqref{condition_part1} and \eqref{condition_part2}, we have
\begin{equation}\label{condition_v}
r^{-(n-4)}\int_{B_r(x_1)} |\partial_i u_j - \partial_j u_i|^2 < \varepsilon_0, \quad \forall r<R/2, ~ x_1 \in B_R(x_0).
\end{equation}

In the following, unless stated otherwise, $C$ denotes some positive constants depending
only on $n$, an upper bound of the diameter of supp$(f)$, and an upper bound of $\|f\|_{L^\infty}$ whose values may change from line to line.

We set $v(x) = Ru(Rx+x_0)$, then we have, by \eqref{v_L2} and \eqref{condition_v},
$$\|v\|_{L^2(B_1)} \le CR^{-(n-4)/2},$$
$$r^{-(n-4)}\int_{B_r(x_1)} |\partial_i v_j - \partial_j v_i|^2 < \varepsilon_0, \quad \forall r<1/2, ~ x_1 \in B_1,$$
and $v$ satisfies the equation
$$\left\{
\begin{aligned}
- \Delta v  + \nabla \pi &= -(v \cdot \nabla) v,\\
\mbox{div}~v&=0
\end{aligned}
\right.
\quad \mbox{in }B_1,$$
where $\pi(x) = R^2p(Rx+x_0)$. Applying Theorem \ref{regularity_criterion} on $v$ gives us 
$$\| \nabla v\|_{L^\infty(B_{1/2})} \le C,$$ 
and hence
$$\| (v \cdot \nabla) v \|_{L^2(B_{1/2})} \le CR^{-(n-4)/2}.$$
By the interior estimate of the stationary Stokes equations (see, e.g., \cite[Theorem 2.2]{ST}), we  have
$$\|v\|_{W^{2,2}(B_{1/4})} \le C(\| (v \cdot \nabla) v \|_{L^2(B_{1/2})} + \|v\|_{L^2(B_{1/2})}) \le CR^{-(n-4)/2},$$
which implies 
$$\| v \|_{L^{\frac{2n}{n-4}}(B_{1/4})}\le CR^{-(n-4)/2}$$
by Sobolev inequality. Then we have
$$\| (v \cdot \nabla) v \|_{L^{\frac{2n}{n-4}}} \le CR^{-(n-4)/2},$$
and we can repeat the process above. For any $q < \infty$, after repeating this process finite times, we have 
$$\|v\|_{W^{2,q}(B_{1/8})} \le C(q)R^{-(n-4)/2},$$
which implies
$$\|v\|_{C^1(\overline{B}_{1/8})}\le CR^{-(n-4)/2}.$$
Reversing the change of variable will give
$$|u(x_0)| \le C|x_0|^{-n/2 + 1}, \quad \mbox{and} \quad |\nabla u(x_0)|\le C|x_0|^{-n/2}.$$
Because of \eqref{u_C1}, so far we have shown that
\begin{equation}\label{u_first_decay}
|u(x)| \le \frac{C}{(1+ |x|)^{n/2 - 1}}, \quad \mbox{and } \quad |\nabla u(x)| \le \frac{C}{(1+ |x|)^{n/2}}.
\end{equation}
Now we use \eqref{integral_equation}, the integral equation $u$ satisfies, by \eqref{u_first_decay} and Lemma \ref{calculus_lemma},
\begin{align*}
|u(x)| &\le \int_{\bR^n} |U(x-y)| \left( |f(y)| + |u(y)| |\nabla u(y)|\right) \, dy\\
&\le C \int_{\bR^n} \frac{1}{|x-y|^{n-2}} \frac{1}{(1 + |y|)^{n-1}} \, dy \le \frac{C}{(1+|x|)^{n-3}},\\
|\nabla u(x)| &\le \int_{\bR^n} |\nabla U(x-y)| \left( |f(y)| + |u(y)| |\nabla u(y)|\right) \, dy\\
&\le C \int_{\bR^n} \frac{1}{|x-y|^{n-1}} \frac{1}{(1 + |y|)^{n-1}} \, dy \le \frac{C}{(1+|x|)^{n-2}}.
\end{align*}
Hence Theorem \ref{decay_u_prop} is proved.\\
\end{proof}

\section{Proof of Theorem \ref{main}}

From now on we fix an arbitrary external force $f \in L^\infty(\bR^n)$ with compact support. For $v \in C_d^1(\bR^n)$ and $t \in [0,1]$, we consider the vector-valued function $u = (u_1, \cdots , u_n)$ given by
\begin{equation}\label{integral_equation_2}
u_i(x) = \int_{\bR^n} U_{ij}(x-y) \left(tf_j(y) - v_k(y) \partial_k v_j(y)\right) \, dy.
\end{equation}
We define an operator
\begin{align*}
F: [0,1] \times C_d^1(\bR^n) &\to C_d^1(\bR^n),\\
(t,v) &\mapsto u,
\end{align*}
where $u$ is given by \eqref{integral_equation_2}. By Lemma \ref{calculus_lemma}, we have
\begin{align}
|F(t,v)(x)| &\le \int_{\bR^n} |U(x-y)| \left( |f(y)| + |v(y)| |\nabla v(y)|\right) \, dy \nonumber\\
&\le C \int_{\bR^n} \frac{1}{|x-y|^{n-2}} \frac{1}{(1 + |y|)^{2n-5}} \, dy \le \frac{C\log (2 + |x|)}{(1+|x|)^{n-2}},\label{F_decay}\\
|\nabla F(t,v)(x)| &\le \int_{\bR^n} |\nabla U(x-y)| \left( |f(y)| + |u(y)| |\nabla u(y)|\right) \, dy\nonumber\\
&\le C \int_{\bR^n} \frac{1}{|x-y|^{n-1}} \frac{1}{(1 + |y|)^{2n-5}} \, dy \le \frac{C\log (2 + |x|)}{(1+|x|)^{n-1}}, \label{grad_F_decay}
\end{align}
where $C > 0$ depends on $n$, an upper bound of the diameter of supp$(f)$, and upper bounds of $\|f\|_{L^\infty}$ and $\|v\|_{C_d^1}$. Therefore $F$ is well-defined. 

A fixed point of $F(1,\cdot)$ in $C_d^1(\bR^n)$ is a solution $u \in C_d^1(\bR^n)$ to the integral equation \eqref{integral_equation}. We will show the existence of such a fixed point by using the Leray Schauder degree theory. First, we show that the operator $F$ is compact.\\

\begin{lemma}
$F: [0,1] \times C_d^1(\bR^n)  \to C_d^1(\bR^n)$ is compact.
\end{lemma}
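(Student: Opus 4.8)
The plan is to prove compactness of $F$ by establishing two properties: first, that $F$ maps bounded sets to sets that are uniformly bounded with uniform decay estimates slightly better than required, and second, that images of bounded sets are equicontinuous on compact sets. The key technical point is that the norm of $C_d^1(\bR^n)$ involves weights $(1+|x|)^{n-3}$ and $(1+|x|)^{n-2}$, while the estimates \eqref{F_decay} and \eqref{grad_F_decay} produce decay of order $(1+|x|)^{-(n-2)}\log(2+|x|)$ and $(1+|x|)^{-(n-1)}\log(2+|x|)$ respectively. Since these decay rates are strictly faster (by one power, even accounting for the logarithm) than what the $C_d^1$ norm demands, the tails of $F(t,v)$ are uniformly small at infinity, which is precisely the mechanism that upgrades boundedness to precompactness in this weighted sup-norm setting.

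First I would fix a bounded set $B \subset C_d^1(\bR^n)$, say $\|v\|_{C_d^1} \le M$ for all $v \in B$, together with $t \in [0,1]$. From \eqref{F_decay} and \eqref{grad_F_decay}, I would record that for all such $(t,v)$,
\begin{equation*}
(1+|x|)^{n-2}|F(t,v)(x)| + (1+|x|)^{n-1}|\nabla F(t,v)(x)| \le C\log(2+|x|),
\end{equation*}
with $C$ depending only on $n$, $\mathrm{supp}(f)$, $\|f\|_{L^\infty}$, and $M$. Multiplying instead by the weaker weights $(1+|x|)^{n-3}$ and $(1+|x|)^{n-2}$ that define the $C_d^1$ norm, I obtain
\begin{equation*}
(1+|x|)^{n-3}|F(t,v)(x)| + (1+|x|)^{n-2}|\nabla F(t,v)(x)| \le \frac{C\log(2+|x|)}{1+|x|},
\end{equation*}
and the right-hand side tends to $0$ as $|x| \to \infty$. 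Thus, given any $\varepsilon > 0$, there is a radius $\rho$, independent of the choice of $(t,v) \in [0,1]\times B$, such that the $C_d^1$-weighted values of $F(t,v)$ are below $\varepsilon$ on $\{|x| > \rho\}$. This handles the behavior at infinity uniformly.

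Next I would establish equicontinuity of the family $\{F(t,v)\}$ and of $\{\nabla F(t,v)\}$ on the compact ball $\overline{B_\rho}$. The integrand in \eqref{integral_equation_2} is $U_{ij}(x-y)g_j(y)$ with $g = tf - (v\cdot\nabla)v$, and by the hypothesis $\|v\|_{C_d^1}\le M$ the source $g$ satisfies a uniform bound $|g(y)| \le C(1+|y|)^{-(2n-5)}$ with $C = C(n,f,M)$. Differentiating once more, the estimate $|\nabla^2 U(x-y)| \le C|x-y|^{-n}$ combined with Lemma \ref{calculus_lemma} gives a uniform bound on $\|\nabla^2 F(t,v)\|_{L^\infty(\overline{B_\rho})}$; equicontinuity of the first derivatives on $\overline{B_\rho}$ then follows from this uniform $C^2$ bound. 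For the continuity in the parameter $t$ and the source, I would note that $F$ depends on $(t,v)$ through $g$ in a way that is Lipschitz in the relevant weighted norms, so the Arzelà-Ascoli theorem applied on $\overline{B_\rho}$ produces, for any sequence $(t_m,v_m)\in[0,1]\times B$, a subsequence along which $F(t_m,v_m)$ and $\nabla F(t_m,v_m)$ converge uniformly on $\overline{B_\rho}$. Combining the uniform smallness of the tails with uniform convergence on $\overline{B_\rho}$, and choosing a diagonal subsequence over an exhausting sequence of radii, yields convergence in the full $C_d^1$ norm. The main obstacle I anticipate is the logarithmic factor in \eqref{F_decay}--\eqref{grad_F_decay}: one must verify that the one extra power of decay relative to the $C_d^1$ weights genuinely dominates the $\log(2+|x|)$ growth, which it does since $(\log(2+|x|))/(1+|x|)\to 0$. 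This is exactly the reason, flagged in the earlier remark, that the exponent $\alpha = n-3 < n-2$ is chosen, making $F$ compact rather than merely bounded.
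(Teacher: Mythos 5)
Your treatment of the behavior at infinity is correct and coincides with the paper's: the estimates \eqref{F_decay} and \eqref{grad_F_decay} give one extra power of decay over the $C_d^1$ weights, the logarithm is harmless, and this yields uniform smallness of the tails over the bounded set, which is indeed the mechanism that converts local convergence into convergence in $C_d^1$.

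The genuine gap is in your local equicontinuity step. You claim a uniform bound on $\|\nabla^2 F(t,v)\|_{L^\infty(\overline{B_\rho})}$ by combining $|\nabla^2 U(x-y)| \le C|x-y|^{-n}$ with Lemma \ref{calculus_lemma}. This does not work: Lemma \ref{calculus_lemma} requires $\alpha < n$, and with $\alpha = n$ the integral $\int |x-y|^{-n}(1+|y|)^{-\beta}\,dy$ diverges at the singularity $y = x$. More substantively, the second derivative of the convolution with $U$ is a genuine singular integral operator; for a source $g = tf - (v\cdot\nabla)v$ that is merely bounded (note $f \in L^\infty$ need not be continuous, and $\nabla v$ for $v \in C_d^1$ need not be Dini continuous), $\nabla^2 F$ is in general not in $L^\infty$, only in $L^q_{loc}$ for every $q < \infty$ (or BMO). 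So the uniform $C^2$ bound you rely on for equicontinuity of $\nabla F(t,v)$ is not available by this route. The paper circumvents this precisely by splitting $F(t^i,v^i)$ into a near part, handled by the Calder\'on--Zygmund estimate to get uniform $W^{2,q}(B_R)$ bounds for all $q < \infty$, and a far part, where the kernel is smooth and the pointwise bounds apply; the embedding $W^{2,q} \hookrightarrow C^{1,1-n/q}$ for $q > n$ then gives the local equicontinuity of both $F$ and $\nabla F$ that your argument needs. Replacing your $C^2$ claim with this $W^{2,q}$ argument (or any other argument giving uniform local $C^{1,\alpha}$ control) repairs the proof; the diagonal/tail assembly at the end is then fine.
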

\begin{proof}
Let $\{(t^i, v^i)\}$ be a bounded sequence in $[0,1] \times C_d^1(\bR^n)$, we will show that there exists a $\xi \in C_d^1(\bR^n)$, and a subsequence, still denoted by $\{(t^i, v^i)\}$, such that $F(t^i,v^i) \to \xi$ in $C_d^1(\bR^n)$.

First we will show that, after passing to a subsequence, there exists a $\xi \in C^1(\bR^n)$ such that
$$F(t^i, v^i) \to \xi \quad \mbox{in}~C^1_{loc}(\bR^n).$$
It suffices to show that
\begin{equation}\label{F_W_2q}
\| F(t^i, v^i) \|_{W^{2,q}(B_R)} \le C(q,R), \quad \forall R >1,~ \forall 1 < q < \infty,
\end{equation}
where $C(q,R) > 0$ depends only on $n,q,R$, but does not depend on $i$. For any $R > 1$, and for any $x \in B_R$, we can write
\begin{align*}
F(t^i, v^i)(x) &= \left( \int_{\{|y| < 2R\}} + \int_{\{|y| \ge 2R\}} \right) U(x-y)\left( t^if(y) - (v^i(y) \cdot \nabla) v^i(y) \right) \, dy\\
&=: I(x) + II(x).
\end{align*}
By the Calderon-Zygmund estimate, we have
$$\|I\|_{W^{2,q}(B_R)} \le C \left( \| f \|_{L^q(B_{2R})} + \| (v \cdot \nabla) v \|_{L^q(B_{2R})} \right) \le C(q,R).$$
For $l = 0,1,2$,
\begin{align*}
\left| \nabla^l II(x) \right| &\le \int_{\{|y| \ge 2R\}} \left| \nabla^l U(x - y) \right| \left( |f(y)| + |v(y)| \left| \nabla v(y) \right| \right)\, dy\\
&\le C \int_{\{|y| \ge 2R\}} \frac{1}{|y|^{n-2+l}} \frac{1}{(1+|y|)^{2n-5}} \,dy \le C(R), \quad \forall |x| < R.
\end{align*}
Therefore, \eqref{F_W_2q} follows.

For any $\varepsilon > 0$, by \eqref{F_decay} and \eqref{grad_F_decay}, there exists an $R > 1$ depending only on $\varepsilon$ and $n$, such that
$$|F(t^i, v^i)(x)|(1+|x|)^{n-3} < \varepsilon, \quad |\nabla F(t^i, v^i)(x)|(1+|x|)^{n-2} < \varepsilon, \quad \forall |x| > R.$$
Therefore $\xi \in C_d^1(\bR^n)$ and, after passing to a subsequence, $F(t^i, v^i) \to \xi$ in $C_d^1(\bR^n)$.\\
\end{proof}

\begin{proof}[Proof of Theorem \ref{main}]

Fix any $f \in L^\infty(\bR^n)$ with compact support. Showing the existence of a solution in $C_d^1(\bR^n)$ to \eqref{NS} is equivalent to showing the existence of a solution of 
$$u - F(1,u)= 0.$$ 
By Proposition \ref{decay_u_prop}, we know that there exists a constant $M$ such that
$$\|u\|_{C_d^1(\bR^n)} \le M,$$
for any solution $u \in C_d^1(\bR^n)$ of $u - F(t,u) = 0$, for any $t \in [0,1]$. So $u - F(t, u) = 0$ has no solution on $\partial B_{2M}$, where $B_{2M}:= \{u \in C_d^1(\bR^n); \|u\|_{C_d^1(\bR^n)} < 2M\}$. The Leray-Schauder degree 
$$deg(Id - F(t,\cdot),B_{2M},0)$$
is well defined for $t \in [0,1]$, and, by the homotopy invariance, it is independent of $t$. In particular,
$$deg(Id - F(1,\cdot),B_{2M},0) = deg(Id - F(0,\cdot),B_{2M},0).$$
See, e.g., Section 2.3 in \cite{Nir}. 

$u - F(0,u) = 0$ is equivalent to
\begin{equation*}
\left\{
\begin{aligned}
- \Delta u  + \nabla p &=  - (u \cdot \nabla) u\\
\mbox{div }u &= 0
\end{aligned}
\right.
\quad \mbox{in } \bR^n.
\end{equation*}
Therefore, $u \equiv 0$ is the only solution in $C_d^1(\bR^n)$ to the equation $u - F(0,u) = 0$. Since $F_u(0,0) = 0$, we have (see, e.g., \cite[Theorem 2.8.1]{Nir})
$$deg(Id - F(0,\cdot),B_{2M},0) = 1 \neq 0.$$
This implies the existence of $u \in C_d^1(\bR^n)$ that satisfies the integral equation \eqref{integral_equation}. Let $p$ be given by \eqref{pressure}, then $(u,p)$ is a regular solution of \eqref{NS}. Since the solution $u$ we obtain satisfies the bound
$$\|u\|_{C_d^1(\bR^n)} \le 2M.$$
It follows, from the calculations in \eqref{F_decay} and \eqref{grad_F_decay},
$$|u(x)| \le \frac{C(\varepsilon)}{(1+|x|)^{n-2-\varepsilon}} \quad \mbox{and} \quad |\nabla u(x)| \le \frac{C(\varepsilon)}{(1+|x|)^{n-1-\varepsilon}},$$
for some $\varepsilon > 0$.
Estimate \eqref{reasonable_decay} follows from Lemma \ref{calculus_lemma} and similar calculations as above.\\
\end{proof}

\section*{Acknowledgment}
The first named author thanks Vladim\'{\i}r \v{S}ver\'{a}k for bringing to his attention in 2012 the work of Frehse and Rů\v{z}i\v{c}ka \cite{FR0}.

\setcounter{section}{0}
\setcounter{theorem}{0}
\setcounter{equation}{0}
\renewcommand{\theequation}{\thesection.\arabic{equation}}
\setcounter{figure}{0}
\setcounter{table}{0}
 
\appendix
\section*{Appendix}
\renewcommand{\thesection}{A} 
 
\subsection{Another proof of Proposition \ref{aprioriestimate}}

In this section, we provide another proof of Proposition \ref{aprioriestimate} using the arguments in \cite{FR1} and \cite{FR6}. First, let us recall the definition of Sobolev-Morrey spaces and state an embedding theorem in \cite{A}.

\begin{definition}
Let $\Omega$ be a bounded smooth domain, $1 \le p < \infty, 0 \le \lambda < n$. We say a function $f \in L^{p,\lambda}(\Omega)$, if
$$\sup_{x \in \Omega, r > 0} \frac{1}{r^\lambda} \int_{B_r(x) \cap \Omega} |f|^p < \infty,$$
with norm
$$\|f\|_{L^{p,\lambda}(\Omega)}= \left( \sup_{x \in \Omega, r > 0} \frac{1}{r^\lambda} \int_{B_r(x) \cap \Omega} |f|^p \right)^{\frac{1}{p}}.$$
We say a function $g \in W^{k,p,\lambda}(\Omega)$, if $\nabla^\alpha g \in L^{p,\lambda}(\Omega)$, for all $|\alpha| \le k$, with norm
$$\|g\|_{W^{k,p,\lambda}(\Omega)} = \sum_{|\alpha| \le k} \|\nabla^\alpha g\|_{L^{p,\lambda}(\Omega)}.\\$$
\end{definition}

\begin{theorem}\label{SM_embedding_thm}
Let $1 < p < \infty, 0 \le \lambda < n$. If $f \in W^{1,p,\lambda}(B_1)$, then $f \in  L^{p^*,\lambda}(B_1)$, where
$$\left\{\begin{aligned}
\frac{1}{p^*} = \frac{1}{p} - \frac{1}{n - \lambda},\quad & \mbox{if } p < n - \lambda;\\
p^* \mbox{can be any finite number},\quad & \mbox{if } p \ge n - \lambda.
\end{aligned}\right.$$
Furthermore,
\begin{equation}\label{SM_embedding}
\|f\|_{L^{p^*,\lambda}(B_1)} \le C(\|\nabla f\|_{L^{p,\lambda}(B_1)} + \|f\|_{L^{1}(B_1)}),
\end{equation}
where $C>0$ depends only on $n, p$ and $\lambda$.\\
\end{theorem}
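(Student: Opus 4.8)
The plan is to reduce the statement to a mapping property of the Riesz potential on Morrey spaces. First I would record the standard pointwise representation: for $f \in W^{1,p}(B_1)$ one has $|f(x)| \le C\, I_1(|\nabla f|)(x) + C\|f\|_{L^1(B_1)}$, where $I_\alpha g(x) := \int_{B_1} |x-y|^{-(n-\alpha)} g(y)\, dy$ is the Riesz potential of order $\alpha$ and the additive constant comes from the mean value $(f)_{B_1}$, which satisfies $|(f)_{B_1}| \le C\|f\|_{L^1(B_1)}$. Since a constant lies in every $L^{p^*,\lambda}(B_1)$ with norm bounded by $C\|f\|_{L^1(B_1)}$, the theorem reduces to proving $\|I_1 g\|_{L^{p^*,\lambda}(B_1)} \le C\|g\|_{L^{p,\lambda}(B_1)}$ for $g = |\nabla f| \ge 0$, with $1/p^* = 1/p - 1/(n-\lambda)$ in the subcritical range $p < n-\lambda$.

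The core is the Adams--Hedberg pointwise estimate. Fix $\delta > 0$ and split $I_1 g(x)$ into the near part over $\{|x-y| < \delta\}$ and the far part over $\{|x-y| \ge \delta\}$, decomposing each dyadically. On the near part, bounding $\int_{|x-y| < 2^{-j}\delta} g$ by $|B_{2^{-j}\delta}|\, Mg(x)$, where $M$ is the Hardy--Littlewood maximal operator, and summing the convergent geometric series gives a bound $C\delta\, Mg(x)$. On the far part I would apply H\"older on each dyadic annulus together with the Morrey bound $\int_{B_r} g^p \le r^\lambda \|g\|_{L^{p,\lambda}}^p$; the exponent appearing on the $j$-th annulus is $1 - (n-\lambda)/p$, which is negative precisely when $p < n-\lambda$, so the series converges and the far part is $\le C\delta^{\,1 - (n-\lambda)/p}\|g\|_{L^{p,\lambda}}$. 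Optimizing $\delta$ to balance the two terms yields the key pointwise inequality $I_1 g(x) \le C\, (Mg(x))^{p/p^*}\, \|g\|_{L^{p,\lambda}}^{1 - p/p^*}$.

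From here the Morrey estimate follows quickly. Raising to the $p^*$ power and integrating over $B_r(x_0)$ gives $\frac{1}{r^\lambda}\int_{B_r(x_0)} (I_1 g)^{p^*} \le C\|g\|_{L^{p,\lambda}}^{p^* - p} \cdot \frac{1}{r^\lambda}\int_{B_r(x_0)} (Mg)^p$; taking the supremum over balls and invoking the boundedness of the maximal operator on $L^{p,\lambda}(B_1)$ for $1 < p < \infty$, $0 \le \lambda < n$ (Chiarenza--Frasca), bounds the right side by $C\|g\|_{L^{p,\lambda}}^{p^*}$, which after taking the $p^*$-th root is the desired inequality \eqref{SM_embedding}. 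For the remaining range $p \ge n-\lambda$ the far-part exponent is no longer negative, but since the domain is bounded the dyadic sum terminates, producing a logarithmic factor at the critical exponent $p = n-\lambda$ and a uniform bound when $p > n-\lambda$; in either case a corresponding optimization yields membership in $L^{p^*,\lambda}(B_1)$ for every finite $p^*$.

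I expect the main obstacle to be two technical points rather than the scheme itself: first, establishing (or citing) the boundedness of the Hardy--Littlewood maximal operator on Morrey spaces, on which the final step rests; and second, treating the borderline case $p \ge n-\lambda$ cleanly, where one must exploit the finiteness of $B_1$ and cannot simply read off $p^*$ from scaling. Since the statement is classical and attributed to \cite{A}, an acceptable alternative is to cite Adams' theorem directly and verify only the reduction carried out in the first paragraph.
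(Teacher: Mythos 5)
Your proposal is correct and follows essentially the same route as the paper: the paper likewise derives the pointwise bound $|f(x)|\le C\int_{B_1}|\nabla f(z)|\,|x-z|^{-(n-1)}\,dz+|f_{B_1}|$ and then simply cites Adams' Morrey estimate for Riesz potentials (\cite[Theorem 3.2]{A}), which is exactly the reduction you carry out in your first paragraph and explicitly offer as the citation alternative at the end. The only difference is that you additionally unpack that citation by reproving the potential estimate via the Hedberg maximal-function splitting together with the Chiarenza--Frasca bound for $M$ on Morrey spaces, which is self-contained but not a different argument in substance.
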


\begin{proof}
For $x,y \in B_1$, we have
\begin{align*}
|f(x) - f(y)| = \left| \int_0^1 \frac{d}{dt}f(ty +(1-t)x) \,dt \right| \le 2\int_0^1 |\nabla f(ty +(1-t)x)| \,dt.
\end{align*}
It follows that
\begin{align*}
|f(x) - f_{B_1}| &\le \frac{1}{|B_1|} \int_{B_1} |f(x) - f(y)| \, dy\\
&\le C \int_{B_1} \int_0^1 |\nabla f(ty +(1-t)x)| \,dt dy\\
&\le C \int_{B_1} \frac{|\nabla f(z)|}{|x-z|^{n-1}} \, dz,
\end{align*}
where $C > 0$ depends only on $n$, $f_{B_1}$ denotes the average of $f$ over $B_1$. Therefore,
\begin{equation}\label{f_representation}
|f(x)| \le C \int_{B_1} \frac{|\nabla f(z)|}{|x-z|^{n-1}} \, dz + |f_{B_1}|.
\end{equation}
Estimate \eqref{SM_embedding} follows after applying a Morrey estimate on Riesz potentials in \cite[Theorem 3.2]{A} to \eqref{f_representation}. 
\end{proof}

We also need the following Sobolev-Morrey space analogue of the interior Sobolev space estimates for the stationary Stokes equations proved in \cite{ST}. This can be proved by using the Morrey space estimates instead of the $L^p$ estimates on Calderon-Zygmund operators in the arguments there.

\begin{theorem}\label{Morrey_Stokes_thm}
Let $(u,p)$ be a smooth solution of the stationary Stokes equations
$$\left\{
\begin{aligned}
- \Delta u + \nabla p &= f,\\
\mbox{div }u &= 0,
\end{aligned}
\right.
\quad \mbox{in}~B_2 \subset \bR^n, 
$$
with smooth $f$. Then for $1 < q < \infty, 0 \le \lambda < n$,
\begin{equation*}
\| \nabla^2 u\|_{L^{q,\lambda}(B_1)} + \| \nabla p \|_{L^{q,\lambda}(B_1)} \le C( \|f\|_{L^{q,\lambda}(B_2)} + \|u\|_{L^1(B_2 \setminus B_1)}),
\end{equation*}
where $C > 0$ depends only on $n$, $q$ and $\lambda$.\\
\end{theorem}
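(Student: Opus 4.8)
The plan is to reproduce the proof of the interior $L^q$ estimate for the stationary Stokes system given in \cite[Theorem 2.2]{ST}, replacing each application of the $L^q$-boundedness of Calder\'on--Zygmund operators by the corresponding boundedness on Morrey spaces. The single new analytic ingredient is the classical fact that if $T$ is a singular integral operator whose kernel is of Calder\'on--Zygmund type (homogeneous of degree $-n$, smooth away from the origin, with the usual cancellation), then $T$ is bounded on $L^{q,\lambda}(\bR^n)$ for every $1<q<\infty$ and $0\le\lambda<n$; this is classical and can be obtained in the same spirit as the Morrey estimates for potentials quoted from \cite{A}. Granting this, the argument of \cite{ST} carries over essentially verbatim.

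Concretely, I would first localize: fix nested balls $B_1\subset B_{3/2}\subset B_2$ and a cut-off $\zeta\in C_c^\infty(B_{3/2})$ with $\zeta\equiv 1$ on $B_1$. Using the fundamental solution \eqref{Stokes_fundamental_UP}, the second derivatives of $u$ and the first derivatives of $p$ on $B_1$ are written as convolutions of the Calder\'on--Zygmund kernels $\nabla^2 U_{ij}$ and $\nabla P_j$ against the localized datum $\zeta f$, plus commutator terms in which at least one derivative falls on $\zeta$. The singular part is estimated directly by the Morrey boundedness of Calder\'on--Zygmund operators, producing the bound $C\|f\|_{L^{q,\lambda}(B_2)}$. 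The commutator terms are supported in the annulus $B_{3/2}\setminus B_1$, so that when evaluated at points of $B_1$ the relevant kernels are smooth; hence these terms are controlled by low-order quantities of $(u,p)$ on the annulus, and, after using interior regularity on $B_2\setminus B_1$ to trade $\nabla u$ and $p$ for $u$, by $\|u\|_{L^1(B_2\setminus B_1)}$. Equivalently, one may first handle the pressure through its Poisson equation $\Delta p=\operatorname{div}f$, which expresses $\nabla p$ as a Calder\'on--Zygmund operator applied to $f$ with no derivative actually falling on $f$, and then insert $F:=f-\nabla p\in L^{q,\lambda}$ into $-\Delta u=F$ to bound $\nabla^2 u$ via the Morrey interior estimate for the Laplacian; this route makes transparent why no norm of $p$ survives on the right-hand side.

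The main point to verify, and the only genuine obstacle beyond bookkeeping, is the Morrey boundedness of the precise operators arising here: that the kernels $\nabla^2 U_{ij}$ and $\nabla P_j$ satisfy the Calder\'on--Zygmund conditions and therefore map $L^{q,\lambda}$ into itself for $1<q<\infty$, $0\le\lambda<n$. A secondary bookkeeping issue is to arrange the localization so that the pressure is eliminated from the right-hand side (by normalizing $p$ and recovering it from $u$ and $f$ through the pressure equation), leaving only $\|f\|_{L^{q,\lambda}(B_2)}$ and $\|u\|_{L^1(B_2\setminus B_1)}$. Once these are in place, the constant $C$ depends only on $n$, $q$ and $\lambda$, as claimed.
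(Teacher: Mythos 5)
Your proposal is correct in substance and rests on the same key ingredient as the paper, namely the boundedness of Calder\'on--Zygmund operators on Morrey spaces $L^{q,\lambda}$ (the paper cites Chiarenza--Frasca \cite{CF} for exactly this), applied to the kernels $\nabla^2 U_{ij}$ and $\nabla P_j$ coming from the fundamental solution \eqref{Stokes_fundamental_UP}. The decomposition, however, is genuinely different. You localize with a cut-off and represent $\nabla^2 u$, $\nabla p$ on $B_1$ as singular integrals of $\zeta f$ plus commutator terms on the annulus, and then must iterate local regularity and normalize $p$ to reduce the annulus terms to $\|u\|_{L^1(B_2\setminus B_1)}$; this last reduction is the delicate part (the pressure and $\nabla u$ on the annulus have to be traded for $u$ without circularity), and you flag it but do not carry it out. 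The paper instead subtracts the global potentials $\tilde u_i = U_{ij}\ast(f_j\chi_{B_2})$, $\tilde p = P_j\ast(f_j\chi_{B_2})$, estimates these directly by the Morrey--Calder\'on--Zygmund bound, and observes that the remainder $(v,\pi)=(u-\tilde u,\,p-\tilde p)$ solves the homogeneous Stokes system, so that $v$ is biharmonic and $\Delta v=\nabla\pi$; interior estimates for biharmonic functions then give $\|\nabla^2 v\|_{L^\infty(B_1)}\le C\|v\|_{L^1(B_2\setminus B_1)}$ with no pressure term ever appearing on the right. This potential-subtraction route buys a cleaner elimination of $p$ and of lower-order derivatives of $u$ (which is precisely the bookkeeping your localization leaves open), while your route is closer to the original argument of \cite{ST} and would generalize more readily to variable-coefficient perturbations. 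Either way the conclusion and the dependence of the constant on $n$, $q$, $\lambda$ are as stated.
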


\begin{proof}
We define
$$\tilde{u}_i:= U_{ij} \ast (f_j\chi_{B_2}), \quad \tilde{p} = P_j \ast (f_j\chi_{B_2}),$$
where $(U,P)$ is the fundamental solution of the stationary Stokes equations as \eqref{Stokes_fundamental_UP}, $\chi_{B_2}$ is the characteristic function on $B_2$. Then by the Morrey space estimates for the Calderon-Zygmund operators (see, e.g., \cite[Theorem 3]{CF}), we have
$$\| \nabla^2 \tilde{u} \|_{L^{p,\lambda}(\bR^n)} + \| \nabla \tilde{p} \|_{L^{p,\lambda}(\bR^n)} \le C\|f\|_{L^{p,\lambda}(B_2)}.$$
Let $v := u - \tilde{u}, \pi = p - \tilde{p}$, then $(v,\pi)$ satisfies
$$\left\{
\begin{aligned}
- \Delta v + \nabla \pi &= 0,\\
\mbox{div }v &= 0,
\end{aligned}
\right. \quad \mbox{in } B_2.$$
Therefore, $v$ is biharmonic. Indeed,
$$\partial_{iikk}v^j = \partial_{iij} \pi = \partial_{iji} \pi = \partial_{ijkk} v^i = 0.$$
By the interior estimates for biharmonic function (see, e.g., \cite{Bro}),
$$\| \nabla^2 v \|_{L^{\infty} (B_{1})} \le C \| v\|_{L^1(B_2 \setminus B_1)}.$$
For any $x_0 \in B_1, r <1/2,$
$$\frac{1}{r^\lambda} \int_{B_r(x_0) \cap B_1} |\nabla^2 v|^p \, dx \le Cr^{n-\lambda}\| \nabla^2 v \|_{L^{\infty} (B_{1})}^p \le C \|v\|_{L^1(B_2 \setminus B_1)}^p,$$
and hence
\begin{align*}
\frac{1}{r^\lambda} \int_{B_r(x_0)\cap B_1} |\nabla^2 u|^p \, dx &\le \frac{1}{r^\lambda} \int_{B_r(x_0)\cap B_1} |\nabla^2 v|^p \, dx  + \frac{1}{r^\lambda} \int_{B_r(x_0)\cap B_1} |\nabla^2 \tilde{u}|^p \, dx \\
&\le C (\|v\|_{L^1(B_2 \setminus B_1)}^p + \|f\|_{L^{p,\lambda}(B_2)}^p)\\
&\le C (\|u\|_{L^1(B_2 \setminus B_1)}^p + \|\tilde{u}\|_{L^1(B_2 \setminus B_1)}^p + \|f\|_{L^{p,\lambda}(B_2)}^p)\\
&\le C (\|u\|_{L^1(B_2 \setminus B_1)}^p + \|f\|_{L^{p,\lambda}(B_2)}^p).
\end{align*}
This gives the desired estimate of $\nabla^2 u$ on $B_1$. For the pressure part, we know
\begin{align*}
\|\nabla \pi \|_{L^{p,\lambda}(B_1)} = \|\Delta v\|_{L^{p,\lambda}(B_1)} &\le C\|\nabla^2 v\|_{L^{p,\lambda}(B_1)}\\
&\le C (\|u\|_{L^1(B_2 \setminus B_1)} + \|f\|_{L^{p,\lambda}(B_2)}).
\end{align*}
Therefore,
\begin{align*}
\|\nabla p \|_{L^{p,\lambda}(B_1)} &\le \|\nabla \pi \|_{L^{p,\lambda}(B_1)} + \|\nabla \tilde{p} \|_{L^{p,\lambda}(B_1)} \le C (\|u\|_{L^1(B_2 \setminus B_1)} + \|f\|_{L^{p,\lambda}(B_2)}).
\end{align*}
\end{proof}

\begin{proof}[Proof of Proposition \ref{aprioriestimate}]
We know from \eqref{u_morrey} that there exist positive constants $\beta$ and $C$ depending only on $n$, $C_0$, and a positive lower bound of $r - n/2$, such that
\begin{equation}\label{grad_u_morrey}
\| \nabla u\|_{L^{2,n-4+\beta}(B_{1/4})} \le C.
\end{equation}

If $\beta \in [2,4)$, we have, by Theorem \ref{SM_embedding_thm}, $\|u\|_{L^q(B_2)} \le C(q)$, for any $q < \infty$. Then Proposition \ref{aprioriestimate} follows from standard estimates for Stokes equations. Therefore, we only need to treat the case $\beta \in (0,2)$.

Rewrite the stationary Navier-Stokes equations \eqref{NS_B1} as
$$- \Delta u  + \nabla p = f - (u \cdot \nabla) u.$$
By Theorem \ref{SM_embedding_thm} and \eqref{grad_u_morrey}, 
$$\|u\|_{L^{s,n-4+ \beta}(B_{1/4})} \le C(\|\nabla u\|_{L^{2,n-4+ \beta}(B_{1/4})} +  \|u\|_{L^{1}(B_{1/4})}) \le C,$$
where $\frac{1}{s} = \frac{1}{2} - \frac{1}{4 - \beta}$. Thus, by Holder's inequality,
\begin{align*}
\| f - (u \cdot \nabla) u \|_{L^{r,n-4+ \beta}(B_{1/4})} &\le \| f \|_{L^{r,n-4+ \beta}(B_{1/4})}+ \|u\|_{L^{s,n-4+ \beta}(B_{1/4})}\|\nabla u\|_{L^{2,n-4+ \beta}(B_{1/4})}\\
&\le C,
\end{align*}
where $\frac{1}{r} =  \frac{1}{s} + \frac{1}{2}$. Then, by Theorem \ref{SM_embedding_thm} and Theorem \ref{Morrey_Stokes_thm}, we have
\begin{align*}
\| \nabla u\|_{L^{t,n-4+ \beta}(B_{1/8})} &\le C( \| \nabla^2 u\|_{L^{r,n-4+ \beta}(B_{1/8})} + \| \nabla u\|_{L^{1}(B_{1/8})} )\\
&\le C(\| f - (u \cdot \nabla) u \|_{L^{r,n-4+ \beta}(B_{1/4})} + \|u\|_{L^1(B_{1/4})})\le C,
\end{align*}
where $\frac{1}{t} = \frac{1}{r} - \frac{1}{4-\beta} = 1 - \frac{2}{4 - \beta}$. One can see that by this process, the regularity of $\nabla u$ has been improved from $L^{t_i, n-4+\beta}$ to $L^{t_{i+1}, n - 4 + \beta}$, where $\frac{1}{t_{i+1}} = \frac{1}{t_i} + \frac{1}{2} - \frac{2}{4-\beta}$. We can repeat this process final times to obtain
$$\| \nabla u \|_{L^{p, n - 4 + \beta}(B_{1/16})} \le C,$$
for some $p \ge n - 4 + \beta$. This implies, by Theorem \ref{SM_embedding_thm}, for any $q<\infty$,
$$\|u\|_{L^q(B_{1/16})} \le C(q).$$
Then we have, by standard estimates for Stokes equations,
$$|u(0)|+ |\nabla u(0)| \le C,$$
where $C>0$ depends only on $n$, $C_0$, and a positive lower bound of $r - n/2$. Since the problem is translation invariant, estimate \eqref{apriori} follows.\\
\end{proof}

\section*{Declarations}
\subsection*{Funding}
Yanyan Li was partially supported by NSF Grants DMS-1501004, DMS-2000261, and Simons Fellows Award 677077. Zhuolun Yang was partially supported by NSF Grants DMS-1501004 and DMS-2000261.
\subsection*{Conflicts of interests}
The authors have no relevant financial or non-financial interests to disclose.
\subsection*{Availability of data and material}
Not applicable.
\subsection*{Code availability}
Not applicable.

\providecommand{\bysame}{\leavevmode\hbox to3em{\hrulefill}\thinspace}


\begin{thebibliography}{10}

\bibitem{A}
D.~R. Adams, \emph{A note on {R}iesz potentials}, Duke Math. J. \textbf{42}
  (1975), no.~4, 765--778. \MR{458158}

\bibitem{BF}
A.~Bensoussan and J.~Frehse, \emph{Regularity results for nonlinear elliptic
  systems and applications}, Applied Mathematical Sciences, vol. 151,
  Springer-Verlag, Berlin, 2002. \MR{1917320}

\bibitem{Bro}
F.~E. Browder, \emph{On the regularity properties of solutions of elliptic
  differential equations}, Comm. Pure Appl. Math. \textbf{9} (1956), 351--361.
  \MR{90740}

\bibitem{BCV}
T.~Buckmaster, M.~Colombo, and V.~Vicol, \emph{Wild solutions of the
  {N}avier-{S}tokes equations whose singular sets in time have hausdorff
  dimension strictly less than 1}, to appear in JEMS.

\bibitem{BV}
T.~Buckmaster and V.~Vicol, \emph{Convex integration and phenomenologies in
  turbulence}, EMS Surv. Math. Sci. \textbf{6} (2019), no.~1-2, 173--263.
  \MR{4073888}

\bibitem{BV1}
\bysame, \emph{Nonuniqueness of weak solutions to the {N}avier-{S}tokes
  equation}, Ann. of Math. (2) \textbf{189} (2019), no.~1, 101--144.
  \MR{3898708}

\bibitem{CKN}
L.~Caffarelli, R.~Kohn, and L.~Nirenberg, \emph{Partial regularity of suitable
  weak solutions of the {N}avier-{S}tokes equations}, Comm. Pure Appl. Math.
  \textbf{35} (1982), no.~6, 771--831. \MR{673830}

\bibitem{CL}
A.~Cheskidov and X.~Luo, \emph{Sharp nonuniqueness for the {N}avier-{S}tokes
  equations}, arXiv:2009.06596.

\bibitem{CF}
F.~Chiarenza and M.~Frasca, \emph{Morrey spaces and {H}ardy-{L}ittlewood
  maximal function}, Rend. Mat. Appl. (7) \textbf{7} (1987), no.~3-4, 273--279.
  \MR{985999}

\bibitem{DeLS1}
C.~De~Lellis and L.~Sz\'{e}kelyhidi~Jr., \emph{The {E}uler equations as a
  differential inclusion}, Ann. of Math. (2) \textbf{170} (2009), no.~3,
  1417--1436. \MR{2600877}

\bibitem{DeLS}
\bysame, \emph{High dimensionality and h-principle in {PDE}}, Bull. Amer. Math.
  Soc. (N.S.) \textbf{54} (2017), no.~2, 247--282. \MR{3619726}

\bibitem{DG}
H.~Dong and X.~Gu, \emph{Boundary partial regularity for the high dimensional
  {N}avier-{S}tokes equations}, J. Funct. Anal. \textbf{267} (2014), no.~8,
  2606--2637. \MR{3255469}

\bibitem{DS}
H.~Dong and R.~Strain, \emph{On partial regularity of steady-state solutions to
  the 6{D} {N}avier-{S}tokes equations}, Indiana Univ. Math. J. \textbf{61}
  (2012), no.~6, 2211--2229. \MR{3129108}

\bibitem{FS}
R.~Farwig and H.~Sohr, \emph{Existence, uniqueness and regularity of stationary
  solutions to inhomogeneous {N}avier-{S}tokes equations in {$\Bbb R^n$}},
  Czechoslovak Math. J. \textbf{59(134)} (2009), no.~1, 61--79. \MR{2486616}

\bibitem{FR0}
J.~Frehse and M.~Rů\v{z}i\v{c}ka, \emph{On the regularity of the stationary
  {N}avier-{S}tokes equations}, Ann. Scuola Norm. Sup. Pisa Cl. Sci. (4)
  \textbf{21} (1994), no.~1, 63--95. \MR{1276763}

\bibitem{FR1}
\bysame, \emph{Regularity for the stationary {N}avier-{S}tokes equations in
  bounded domains}, Arch. Rational Mech. Anal. \textbf{128} (1994), no.~4,
  361--380. \MR{1308859}

\bibitem{FR3}
\bysame, \emph{Existence of regular solutions to the stationary
  {N}avier-{S}tokes equations}, Math. Ann. \textbf{302} (1995), no.~4,
  699--717. \MR{1343646}

\bibitem{FR4}
\bysame, \emph{Regular solutions to the steady {N}avier-{S}tokes equations},
  Navier-{S}tokes equations and related nonlinear problems ({F}unchal, 1994),
  Plenum, New York, 1995, pp.~131--139. \MR{1373209}

\bibitem{FR2}
\bysame, \emph{Existence of regular solutions to the steady {N}avier-{S}tokes
  equations in bounded six-dimensional domains}, Ann. Scuola Norm. Sup. Pisa
  Cl. Sci. (4) \textbf{23} (1996), no.~4, 701--719 (1997). \MR{1469571}

\bibitem{FR6}
\bysame, \emph{Weighted estimates for the stationary {N}avier-{S}tokes
  equations}, Mathematical theory in fluid mechanics ({P}aseky, 1995), Pitman
  Res. Notes Math. Ser., vol. 354, Longman, Harlow, 1996, pp.~1--29.
  \MR{1435978}

\bibitem{FR5}
\bysame, \emph{A new regularity criterion for steady {N}avier-{S}tokes
  equations}, Differential Integral Equations \textbf{11} (1998), no.~2,
  361--368. \MR{1741851}

\bibitem{Ger}
C.~Gerhardt, \emph{Stationary solutions to the {N}avier-{S}tokes equations in
  dimension four}, Math. Z. \textbf{165} (1979), no.~2, 193--197. \MR{520820}

\bibitem{GM}
M.~Giaquinta and G.~Modica, \emph{Nonlinear systems of the type of the
  stationary {N}avier-{S}tokes system}, J. Reine Angew. Math. \textbf{330}
  (1982), 173--214. \MR{641818}

\bibitem{GW}
D.~Gilbarg and H.~F. Weinberger, \emph{Asymptotic properties of steady plane
  solutions of the {N}avier-{S}tokes equations with bounded {D}irichlet
  integral}, Ann. Scuola Norm. Sup. Pisa Cl. Sci. (4) \textbf{5} (1978), no.~2,
  381--404. \MR{501907}

\bibitem{JS}
H.~Jia and V.~\v{S}ver\'{a}k, \emph{Asymptotics of stationary {N}avier {S}tokes
  equations in higher dimensions}, Acta Math. Sin. (Engl. Ser.) \textbf{34}
  (2018), no.~4, 598--611. \MR{3773810}

\bibitem{K}
K.~Kang, \emph{On regularity of stationary {S}tokes and {N}avier-{S}tokes
  equations near boundary}, J. Math. Fluid Mech. \textbf{6} (2004), no.~1,
  78--101. \MR{2027755}

\bibitem{Ko}
M.~Kontovourkis, \emph{On elliptic equations with low-regularity
  divergence-free drift terms and the steady-state {N}avier-{S}tokes equations
  in higher dimensions}, ProQuest LLC, Ann Arbor, MI, 2007, Thesis
  (Ph.D.)--University of Minnesota. \MR{2710715}

\bibitem{KS}
A.~Korolev and V.~\v{S}ver\'{a}k, \emph{On the large-distance asymptotics of
  steady state solutions of the {N}avier-{S}tokes equations in 3{D} exterior
  domains}, Ann. Inst. H. Poincar\'{e} Anal. Non Lin\'{e}aire \textbf{28}
  (2011), no.~2, 303--313. \MR{2784073}

\bibitem{Luo}
X.~Luo, \emph{Stationary solutions and nonuniqueness of weak solutions for the
  {N}avier-{S}tokes equations in high dimensions}, Arch. Ration. Mech. Anal.
  \textbf{233} (2019), no.~2, 701--747. \MR{3951691}

\bibitem{Nir}
L.~Nirenberg, \emph{Topics in nonlinear functional analysis}, Courant Lecture
  Notes in Mathematics, vol.~6, New York University, Courant Institute of
  Mathematical Sciences, New York; American Mathematical Society, Providence,
  RI, 2001, Revised reprint of the 1974 original. \MR{1850453}

\bibitem{Ser}
J.~Serrin, \emph{On the interior regularity of weak solutions of the
  {N}avier-{S}tokes equations}, Arch. Rational Mech. Anal. \textbf{9} (1962),
  187--195. \MR{136885}

\bibitem{So}
H.~Sohr, \emph{Zur {R}egularit\"{a}tstheorie der instation\"{a}ren
  {G}leichungen von {N}avier-{S}tokes}, Math. Z. \textbf{184} (1983), no.~3,
  359--375. \MR{716283}

\bibitem{S2}
M.~Struwe, \emph{On partial regularity results for the {N}avier-{S}tokes
  equations}, Comm. Pure Appl. Math. \textbf{41} (1988), no.~4, 437--458.
  \MR{933230}

\bibitem{S}
\bysame, \emph{Regular solutions of the stationary {N}avier-{S}tokes equations
  on {$\bold R^5$}}, Math. Ann. \textbf{302} (1995), no.~4, 719--741.
  \MR{1343647}

\bibitem{ST}
V.~\v{S}ver\'{a}k and T.~Tsai, \emph{On the spatial decay of 3-{D} steady-state
  {N}avier-{S}tokes flows}, Comm. Partial Differential Equations \textbf{25}
  (2000), no.~11-12, 2107--2117. \MR{1789922}

\bibitem{TX}
G.~Tian and Z.~Xin, \emph{Gradient estimation on {N}avier-{S}tokes equations},
  Comm. Anal. Geom. \textbf{7} (1999), no.~2, 221--257. \MR{1685610}

\bibitem{T2}
T.~Tsai, \emph{On problems arising in the regularity theory for the
  {N}avier-{S}tokes equations}, ProQuest LLC, Ann Arbor, MI, 1998, Thesis
  (Ph.D.)--University of Minnesota. \MR{2697733}

\bibitem{T}
\bysame, \emph{Lectures on {N}avier-{S}tokes equations}, Graduate Studies in
  Mathematics, vol. 192, American Mathematical Society, Providence, RI, 2018.
  \MR{3822765}

\end{thebibliography}
\end{document}